
\documentclass[10pt]{amsart}
\usepackage{latexsym}
\usepackage{amssymb}
\usepackage{amsmath}
\usepackage{mathrsfs}

\setcounter{MaxMatrixCols}{10}

\newtheorem{theorem}{Theorem}[section]

\newtheorem{corollary}[theorem]{Corollary}
\newtheorem{definition}[theorem]{Definition}

\newtheorem{lemma}[theorem]{Lemma}
\newtheorem{proposition}[theorem]{Proposition}

\begin{document}
\title[Nonstandard analysis and sumset phenomenon]{Nonstandard analysis and the sumset phenomenon in arbitrary amenable groups}
\author{Mauro Di Nasso}
\address{Dipartimento di Matematica, Universit\`{a} di Pisa, Italy.}
\email{dinasso@dm.unipi.it}
\author{Martino Lupini}
\address{Department of Mathematics and Statistics, York University, Canada.}
\email{mlupini@mathstat.yorku.ca}
\thanks{The first named author was supported by PRIN ``O-minimalit\`a,
metodi e modelli nonstandard e applicazioni''. The second author was
supported by the York University Elia Scholars Program and by the York
University Graduate Development Fund.}
\subjclass[2010]{03H05, 43A07, 11B05, 11B13.}
\keywords{Nonstandard analysis; Amenable group; Banach density;
piecewise syndetic set; product set; Delta-set.}

\begin{abstract}
Beiglb\"ock, Bergelson and Fish proved that if subsets $A$, $B$ of a
countable discrete amenable group $G$ have positive Banach densities $\alpha$
and $\beta$ respectively, then the product set $AB$ is piecewise syndetic,
i.e.\ there exists $k$ such that the union of $k$-many left translates of $%
AB $ is thick. Using nonstandard analysis we give a shorter alternative
proof of this result that does not require $G$ to be countable and moreover
yields the explicit bound $k\le 1/\alpha\beta$. We
also prove with similar methods that if $\{A_i\}_{i=1}^n$ are finitely many subsets of $G$ having
positive Banach densities $\alpha _i$ and $G$ is countable, then there
exists a subset $B$ whose Banach density is at least $\prod_{i=1}^{n}
\alpha_i $ and such that $BB^{-1}\subseteq\bigcap_{i=1}^{n}A_i A_i^{-1}$. In
particular, the latter set is piecewise Bohr.
\end{abstract}

\maketitle

\section*{Introduction.}

Using nonstandard analysis, in 2000 R. Jin proved that the sumset
$A+B$ of two sets of integers is
piecewise syndetic whenever both $A$ and $B$ have positive Banach density (%
\cite{jin}). Afterwards, with ergodic theory, M. Beiglb\"{o}ck, V. Bergelson and A. Fish
generalized Jin's theorem showing that if two subsets $A$ and $B$ of a \emph{%
countable} amenable group have positive Banach density, then their product
set $AB$ is piecewise syndetic, and in fact piecewise Bohr (\cite{bbf}). In
this paper, by using the nonstandard characterization of Banach density in
amenable groups, we extend that result to the uncountable case, and we also
provide an explicit bound on the number of left translates of $AB$ that are
needed to cover a thick set. Moreover, we extend some of the properties of
Delta-sets $A-A$ proved in \cite{dn} for sets of integers, to the general
setting of amenable groups. In particular, applying the pointwise ergodic
theorem for countable amenable groups in the nonstandard setting, we show
that any finite intersection $\bigcap_{i=1}^{n}A_{i}A_{i}^{-1}$ of sets $%
A_{i}$ of positive Banach density contains $BB^{-1}$ for some set $B$ of
positive Banach density and, as a consequence, is piecewise Bohr.

\smallskip Let us now introduce some terminology to be used in the paper, as
well as some combinatorial notions that we shall consider. Let $G$ be a
group. If $A,B\subseteq G$, we denote by $A^{-1}=\{a^{-1}\mid a\in A\}$ the
set of inverses of elements of $A$. A \emph{translate} of $A$ is a set of
the form $xA=\{xa\mid a\in A\}$. More generally, for subsets $A,B\subseteq G$%
, we denote the \emph{product set} $\{ab\mid a\in A\ \text{and}\ b\in B\}$
by $AB$.

A set $A\subseteq G$ is $k$-\emph{syndetic} if $k$ translates of $A$ suffice
to cover $G$, i.e.\ if $G=F A$ for some $F\subseteq A$ such that $|F|\leq k$%
. The set $A$ is \emph{thick} if the family of its translates $\{gA\mid g\in
G\}$ has the \emph{finite intersection property}, i.e.\ $\bigcap_{x\in H}xA
\neq \emptyset $ for all finite $H\subseteq G$ (equivalently, for every
finite $H$ there is $x\in G$ such that $Hx\subseteq A$).


Another relevant notion is obtained by combining syndeticity and thickness.
A set $A$ is \emph{piecewise} $k$-\emph{syndetic} if $k$ translates of $A$
suffice to cover a thick set, i.e.\ if $F A$ is thick for some $F\subseteq A$
such that $|F|\le k$. 
(For more on these notions see \cite{bhm}.)

\smallskip Familiarity will be assumed with the basics of \emph{nonstandard
analysis}, namely with the notions of \emph{hyperextension} (or nonstandard
extension), \emph{internal set}, \emph{hyperfinite set} (or ${}^{\ast }$%
finite set), the \emph{transfer principle}, and the properties of \emph{%
overspill} and $\kappa $-\emph{saturation} (recall that for every cardinal $%
\kappa $ there exist $\kappa$-saturated nonstandard models). Moreover, in
Section \ref{sec-countable} we shall also use the \emph{Loeb measure}. Good
references for the nonstandard notions used in this paper are e.g.\ the
introduction given in \cite{ck} \S 4.4, and the monograph \cite{Goldblatt}
where a comprehensive treatment of the theory is given. However, there are
several other interesting books on nonstandard analysis and its applications
that the reader may also want to consult (see e.g.\ \cite{nato,lnl,Cut}).

Let us now fix the \textquotedblleft nonstandard" notation we shall adopt
here. If $X$ is a standard entity, ${}^{\ast }\!{X}$ denotes its
hyperextension. A subset $A$ of ${}^{\ast }X$ is \emph{internal} if it
belongs to the hyperextension of the power set of $X$. If $\xi ,\zeta \in
{}^{\ast }\mathbb{R}$ are hyperreal numbers, we write $\xi \approx \zeta $
when $\xi $ and $\zeta $ are \emph{infinitely close}, i.e.\ when their
distance $|\xi -\zeta |$ is infinitesimal. If $\xi \in {}^{\ast }\mathbb{R}$
is finite, then its \emph{standard part} $\mathrm{st}(\xi )$ is the unique
real number which is infinitely close to $\xi $. We write $\xi \lesssim \eta
$ to mean that $\mathrm{st}(\xi -\eta )\leq 0$, i.e.\ $\xi <\eta $ or $\xi
\approx \eta $.

\smallskip We would like to thank Mathias Beiglb\"ock, Neil Hindman and
Renling Jin for many useful conversations, and Samuel Coskey for his
comments and suggestions.


\bigskip

\section{Amenable groups and Banach density}

In this paper we aim at generalizing combinatorial properties of sets of
integers related to their asymptotic density to more general groups. To this
purpose, it is convenient to work in the framework of amenable groups, that
are endowed with a suitable notion of density. Amenable groups admit several
equivalent characterizations (see e.g.\ \cite{Wagon,Paterson}). The most
convenient definition for our purposes is the following one, first isolated
by F\o lner \cite{Folner}.

\medskip

\begin{definition}
A group $G$ is \emph{amenable} if and only if it satisfies the following

\smallskip

\begin{itemize}
\item \emph{F\o lner's condition}: For every finite $H\subset G$ and for
every $\varepsilon >0$ there exists a finite set $K$ which is
\textquotedblleft $(H,\varepsilon )$-\emph{invariant}", i.e.\ $K$ is
nonempty and for every $h\in H$ one has
\begin{equation*}
\frac{|hK\bigtriangleup K|}{|K|}\ <\ \varepsilon .
\end{equation*}
\end{itemize}
\end{definition}

\medskip

The Banach density in amenable groups can be defined using the $%
(H,\varepsilon) $-invariant sets as the ``finite approximations" of $G$.
Using almost invariant sets ensures that the notion of density so obtained
is invariant by left translations.

\medskip

\begin{definition}
Let $G$ be an amenable group. The (upper) \emph{Banach density} $d(A)$ of a
subset $A\subseteq G$ is defined as the least upper bound of the set of real
numbers $\alpha $ such that for every finite $H\subseteq G$ and for every $%
\varepsilon >0$ there exists a finite $K$ which is $(H,\varepsilon )$%
-invariant and satisfies $\frac{|A\cap K|}{|K|}\geq \alpha $. Similarly, the
\emph{lower Banach density} $\underline{d}(A)$ is the least upper bound of
the numbers $\alpha $ such that, for some finite subset $H$ of $G$ and some $%
\varepsilon >0$, every finite subset $K$ of $G$ which is $\left(
H,\varepsilon \right) $-invariant satisfies $\frac{\left\vert A\cap
K\right\vert }{\left\vert K\right\vert }\geq \alpha $.
\end{definition}

\medskip It is not difficult to see that if $A$ is piecewise $k$-syndetic,
then $d\left( A\right) \geq 1/k$, and if $A$ is $k$-syndetic then $%
\underline{d}\left( A\right) \geq 1/k$.

\smallskip We now prove convenient nonstandard characterizations that will
be used in the sequel. The proof of the first part of Proposition \ref{prop:nonstandard-amenable} is essentially contained in Section 3 of \cite{Henson-measures} and reported here for convenience of the reader.

\medskip

\begin{proposition}\label{prop:nonstandard-amenable}
A group $G$ is amenable if and only if in every sufficiently saturated
nonstandard model one finds a ``\emph{F\o lner approximation}" of $G $, i.e.
a nonempty hyperfinite set $E\subseteq {}^{\ast}G$ such that for all $g\in G$%
:
\begin{equation*}
\frac{\left\vert gE\bigtriangleup E\right\vert}{\left\vert E\right\vert}
\approx 0.
\end{equation*}

Moreover, if $G$ is amenable, for all $A\subseteq G$ one has:
\begin{equation*}
d(A)\ =\ \max \left\{ \mathrm{st}\left( \frac{|{}^{\ast }\!{A}\cap E|}{|E|}%
\right) \,\Big|\,E\ \text{F\o lner approximation of }G\right\} .
\end{equation*}%
\begin{equation*}
\underline{d}(A)\ =\ \min \left\{ \mathrm{st}\left( \frac{|{}^{\ast }\!{A}%
\cap E|}{|E|}\right) \,\Big|\,E\ \text{F\o lner approximation of }G\right\} .
\end{equation*}
\end{proposition}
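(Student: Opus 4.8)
The plan is to prove the three assertions in sequence, with the F\o lner condition on the standard side corresponding, via saturation, to the existence of a single hyperfinite approximating set on the nonstandard side.

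\medskip

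\emph{First, the existence of a F\o lner approximation.} Suppose $G$ is amenable. Work in a nonstandard model which is $\kappa$-saturated for some $\kappa > |G|$. For each finite $H \subseteq G$ and each $\varepsilon > 0$, amenability gives a finite $(H,\varepsilon)$-invariant set $K_{H,\varepsilon}$; by transfer, the internal statement ``there exists a hyperfinite nonempty $K \subseteq {}^{\ast}G$ such that $|hK \bigtriangleup K|/|K| < \varepsilon$ for all $h \in H$'' holds. I would consider the family of internal sets
\begin{equation*}
\Phi_{H,\varepsilon} = \left\{ K \subseteq {}^{\ast}G \text{ hyperfinite, nonempty} : \frac{|hK \bigtriangleup K|}{|K|} < \varepsilon \text{ for all } h \in H \right\},
\end{equation*}
indexed by pairs $(H,\varepsilon)$ with $H \subseteq G$ finite and $\varepsilon > 0$ (here $h$ really means ${}^{\ast}h = h$). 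This family has the finite intersection property, since $\Phi_{H_1,\varepsilon_1} \cap \Phi_{H_2,\varepsilon_2} \supseteq \Phi_{H_1 \cup H_2, \min(\varepsilon_1,\varepsilon_2)} \neq \emptyset$. As there are at most $|G| \cdot \aleph_0 < \kappa$ such sets, $\kappa$-saturation yields an $E$ in the intersection of all of them; this $E$ satisfies $|gE \bigtriangleup E|/|E| < \varepsilon$ for every $g \in G$ and every standard $\varepsilon > 0$, hence $|gE \bigtriangleup E|/|E| \approx 0$ for all $g \in G$, as required. Conversely, if such an $E$ exists, then for given finite $H \subseteq G$ and $\varepsilon > 0$ the internal (by transfer, standard) set $\Phi_{H,\varepsilon}$ is nonempty (it contains $E$), so transfer back produces a finite $(H,\varepsilon)$-invariant set in $G$; thus $G$ is amenable.

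\medskip

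\emph{Second, the formula for $d(A)$.} Let $\alpha = d(A)$. For one inequality, fix any F\o lner approximation $E$ and set $\beta = \mathrm{st}(|{}^{\ast}A \cap E|/|E|)$. For each standard $\varepsilon > 0$ and finite $H \subseteq G$, since $|hE \bigtriangleup E|/|E| \approx 0 < \varepsilon$ for all $h \in H$ and $|{}^{\ast}A \cap E|/|E| > \beta - \varepsilon$, transfer of the statement ``there is a finite $(H,\varepsilon)$-invariant $K$ with $|A \cap K|/|K| \ge \beta - \varepsilon$'' (which $E$ witnesses in the nonstandard world) shows such a $K$ exists in $G$; since $\varepsilon$ and $H$ were arbitrary, $d(A) \ge \beta - \varepsilon$ for all $\varepsilon > 0$, so $d(A) \ge \beta$. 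This gives $d(A) \ge \sup_E \mathrm{st}(|{}^{\ast}A \cap E|/|E|)$. For the reverse inequality I need to produce an $E$ achieving $\alpha$. For each finite $H \subseteq G$, each $\varepsilon > 0$, and each rational $\alpha' < \alpha$, pick a finite $(H,\varepsilon)$-invariant $K$ with $|A \cap K|/|K| \ge \alpha'$; the corresponding internal family
\begin{equation*}
\Psi_{H,\varepsilon,\alpha'} = \left\{ K \subseteq {}^{\ast}G \text{ hyperfinite, nonempty} : \frac{|hK \bigtriangleup K|}{|K|} < \varepsilon \ \forall h \in H, \ \frac{|{}^{\ast}A \cap K|}{|K|} \ge \alpha' \right\}
\end{equation*}
again has the finite intersection property and has cardinality $< \kappa$, so saturation gives $E$ in all of them simultaneously. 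This $E$ is a F\o lner approximation with $\mathrm{st}(|{}^{\ast}A \cap E|/|E|) \ge \alpha' $ for every rational $\alpha' < \alpha$, hence $\ge \alpha$. Combined with the first inequality, the supremum equals $\alpha$ and is attained, so it is a maximum.

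\medskip

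\emph{Third, the formula for $\underline{d}(A)$.} This is dual. One first checks that for \emph{every} F\o lner approximation $E$ one has $\mathrm{st}(|{}^{\ast}A \cap E|/|E|) \ge \underline{d}(A)$: by definition of $\underline{d}(A)$, for each $\alpha' < \underline{d}(A)$ there are a finite $H$ and $\varepsilon > 0$ such that every finite $(H,\varepsilon)$-invariant $K$ in $G$ satisfies $|A \cap K|/|K| \ge \alpha'$; by transfer the same holds for internal hyperfinite $(H,\varepsilon)$-invariant sets, and $E$ is such a set (since $|hE \bigtriangleup E|/|E| \approx 0 < \varepsilon$), so $|{}^{\ast}A \cap E|/|E| \ge \alpha'$. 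Letting $\alpha' \uparrow \underline{d}(A)$ gives the claim, hence $\min_E \mathrm{st}(|{}^{\ast}A \cap E|/|E|) \ge \underline{d}(A)$. For the reverse, I must exhibit a F\o lner approximation $E$ with $\mathrm{st}(|{}^{\ast}A \cap E|/|E|)$ arbitrarily close to $\underline{d}(A)$: for each rational $\alpha'' > \underline{d}(A)$, the failure of the lower-density condition at level $\alpha''$ means that for \emph{every} finite $H$ and every $\varepsilon > 0$ there is a finite $(H,\varepsilon)$-invariant $K$ with $|A \cap K|/|K| < \alpha''$; so the internal family $\{K : K \text{ hyperfinite nonempty}, |hK \bigtriangleup K|/|K| < \varepsilon \ \forall h \in H, |{}^{\ast}A \cap K|/|K| < \alpha''\}$ has the finite intersection property, and saturation yields $E$ in all of them, a F\o lner approximation with $\mathrm{st}(|{}^{\ast}A \cap E|/|E|) \le \alpha''$. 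Since $\alpha'' > \underline{d}(A)$ was arbitrary rational, the minimum is $\underline{d}(A)$ and is attained.

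\medskip

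The main obstacle is organizational rather than deep: one must be careful that the relevant families of internal sets genuinely have the finite intersection property (this uses that $(H_1 \cup H_2, \min(\varepsilon_1,\varepsilon_2))$-invariance implies both $(H_i,\varepsilon_i)$-invariances, and that imposing $|{}^{\ast}A \cap K|/|K| \ge \alpha'$ for the larger of finitely many $\alpha'$'s implies it for all of them) and that their cardinality stays below the saturation cardinal $\kappa$, which forces the standing assumption ``sufficiently saturated'' to mean $\kappa > |G|$. The one genuinely non-formal point is checking, in the converse direction of the first part, that nonemptiness of the \emph{standard} set of finite $(H,\varepsilon)$-invariant subsets follows from nonemptiness of its hyperextension — this is exactly transfer, but it is worth stating explicitly since it is what makes the existence of a F\o lner approximation equivalent to, and not merely a consequence of, amenability.
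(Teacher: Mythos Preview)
Your proof is correct and follows essentially the same saturation-plus-transfer approach as the paper (which indexes its families by pairs $(g,n)\in G\times\mathbb{N}$ rather than by $(H,\varepsilon)$, and omits the $\underline{d}(A)$ argument that you actually supply). One small slip to fix: with $\varepsilon$ ranging over all positive reals, your family $\{\Phi_{H,\varepsilon}\}$ has cardinality $|G|\cdot 2^{\aleph_0}$, not $|G|\cdot\aleph_0$ as you claim; either restrict to $\varepsilon\in\{1/n:n\in\mathbb{N}\}$ (as the paper implicitly does) or simply take $\kappa>\max(|G|,2^{\aleph_0})$, which is still covered by ``sufficiently saturated''.
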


\begin{proof}
Assume first that $G$ is amenable. For $g\in G$ and $n\in\mathbb{N}$ let

\begin{equation*}
\Gamma (g,n)=\left\{ K\subseteq G\ \text{finite nonempty}\,\Big|\,\frac{%
\left\vert gK\bigtriangleup K\right\vert }{\left\vert K\right\vert }<\frac{1%
}{n}\right\} .
\end{equation*}

It is readily seen that by F\o lner's condition the family of all sets $%
\Gamma(g,n)$ has the finite intersection property. Then, in any nonstandard
model that satisfies $\kappa$-saturation with $\kappa>\max\{|G|,\aleph_0\}$,
the hyperextensions ${}^*\Gamma(g,n)$ have a nonempty intersection, and
every $E\in\bigcap\{{}^*\Gamma(g,n)\mid g\in G, n\in\mathbb{N}\}$ is a F\o %
lner approximation of $G$.

Conversely, given $H=\{g_1,\ldots,g_m\}\subseteq G$ and $\varepsilon>0$, the
existence of a nonempty finite $(H,\varepsilon)$-invariant set is proved by
applying transfer to the following property, which holds in the nonstandard
model: \emph{``There exists a nonempty hyperfinite $E\subseteq {}^{\ast }G$
such that $\left\vert g_iE\bigtriangleup E\right\vert< \varepsilon\left\vert
E\right\vert$ for all $i\in \{1,\ldots ,m\}$"}.

\smallskip Suppose now that $G$ is amenable and $A\subseteq G$. Consider a F%
\o lner approximation $E$ of $G$ and define $\alpha =\mathrm{st}\left( \frac{%
\left\vert {}^{\ast }\!A\cap E\right\vert }{\left\vert E\right\vert }\right)
$ . If $H=\left\{ g_{1},\ldots ,g_{n}\right\} \subseteq G$ and $\varepsilon
>0$, applying transfer to the statement \emph{\textquotedblleft There exists
a nonempty hyperfinite subset $E\subseteq {}^{\ast }G$ such that $\left\vert
g_{i}E\bigtriangleup E\right\vert <\varepsilon \left\vert E\right\vert $ for
every $i=1,2,\ldots ,n$ and $\left\vert {}^{\ast }\!A\cap E\right\vert
>(\alpha -\varepsilon )\left\vert E\right\vert $\textquotedblright } one
obtains the existence of an $\left( H,\varepsilon \right) $-invariant subset
$K$ of $G$ such that $\left\vert A\cap K\right\vert >(\alpha -\varepsilon
)\left\vert K\right\vert $. This shows that
\begin{equation*}
d\left( A\right) \geq \sup \left\{ \frac{\left\vert {}^{\ast }A\cap
E\right\vert }{\left\vert E\right\vert }\,\Big|\,E\text{ F\o lner
approximation of }G\right\} .
\end{equation*}%
It remains to show that the $\sup $ is a maximum, and it is equal to $%
d\left( A\right) $. Define for $g\in G$ and $n\in \mathbb{N}$,
\begin{equation*}
\Lambda _{A}\left( g,n\right) =\left\{ K\in \Gamma \left( g,n\right) \,\Big|%
\,\frac{\left\vert A\cap K\right\vert }{\left\vert K\right\vert }>d\left(
A\right) -\frac{1}{n}\right\} \text{.}
\end{equation*}%
It is easily seen that the family of all sets $\Lambda _{A}\left( g,n\right)
$ has the finite intersection property. As before, in any nonstandard model
that satisfies $\kappa $-saturation with $\kappa >\max \{|G|,\aleph _{0}\}$,
the hyperextensions ${}^{\ast }\Lambda _{A}\left( g,n\right) $ have a
nonempty intersection, and every $E\in \bigcap \{{}^{\ast }\Lambda
_{A}\left( g,n\right) \mid g\in G,n\in \mathbb{N}\}$ is a F\o lner
approximation of $G$ such that $\mathrm{st}\left( \frac{\left\vert {}^{\ast
}A\cap E\right\vert }{\left\vert E\right\vert }\right) =d\left( A\right) $.

\smallskip The proof of the nonstandard characterization of the lower Banach
density is similar and is omitted. 
\end{proof}

\medskip It is often used in the literature the notion of \emph{F\o lner
sequence}, i.e.\ a sequence $\left( F_{n}\right) _{n\in \mathbb{N}}$ of
finite subsets of $G$ such that, for all $g\in G$,
\begin{equation*}
\lim_{n\rightarrow \infty }\frac{|gF_{n}\bigtriangleup F_{n}|}{|F_{n}|}\ =\
0.
\end{equation*}

We remark that, if the above condition holds, then for every finite $%
H\subset G$ and for every $\varepsilon>0$ the sets $F_n$ are $%
(H,\varepsilon) $-invariant for all sufficiently large $n$. It follows that
a \emph{countable} group $G$ is amenable if and only if it admits a F\o lner
sequence. Moreover, in the countable case the F\o lner density of a set $%
A\subseteq G$ is characterized as follows:

\begin{equation*}
d(A)\ =\ \sup \left\{ \limsup_{n\rightarrow \infty }\frac{|A\cap F_{n}|}{
|F_{n}|}\,\Big|\,\left( F_{n}\right) _{n\in \mathbb{N}}\ \text{a F\o lner
sequence}\right\} .
\end{equation*}

\medskip It is a well known fact (see for example \cite{bbf}, Remark 1.1),
that if $\left( F_n\right)_{n\in \mathbb{N}}$ is any F\o lner sequence and $%
A\subseteq G$, then there is a sequence $\left( g_n\right) _{n\in \mathbb{N}
} $ of elements of $G$ such that
\begin{equation*}
d(A)=\limsup _{n\in \mathbb{N}}\frac{|A\cap F_{n}g_{n}|}{|F_n|}
\end{equation*}
From this, it immediately follows that, when $G=\mathbb{Z}$, the Banach
density as defined here coincides with the usual notion of Banach density
for sets of integers.

\smallskip For an extensive treatment of Banach density and its
generalizations in the context of semigroups, the reader is referred to \cite%
{hs2006}.

\medskip The following notion of density Delta-sets is a generalization of
the Delta-sets $A-A=\{a-a^{\prime}\mid a,a^{\prime}\in A\}$ of sets of
integers.

\medskip

\begin{definition}
Let $G$ be an amenable group, and let $\varepsilon \geq 0$. For $A\subseteq
G $, the corresponding \emph{$\varepsilon$-density Delta-set} (or \emph{$%
\varepsilon$-Delta-set} for short) is defined as $\Delta _{\varepsilon
}(A)=\{g\in G\mid d(A\cap gA)>\varepsilon \}$.
\end{definition}

\medskip

Observe that $\Delta_\varepsilon(A)\subseteq \Delta_0(A)\subseteq A A^{-1}$.
We now introduce a notion of embeddability between sets of a group. The idea
is to have a suitable partial ordering relation at hand that preserves the
finite combinatorial structure of sets.

\medskip

\begin{definition}
Let $A,B\subseteq G$. We say that $A$ is \emph{finitely embeddable} in $B$,
and write $A\vartriangleleft B$, if every finite subset of $A$ has a right
translate contained in $B$.
\end{definition}


\medskip It is immediate from the definitions that $A$ is thick if and only
if $G\vartriangleleft A$. Finite embeddability admits the following
nonstandard characterization.

\medskip

\begin{proposition}
Let $A,B\subseteq G$. Then $A\vartriangleleft B$ if and only if in every
sufficiently saturated nonstandard model one has $A\eta\subseteq {}^{\ast }B$
for some $\eta\in {}^{\ast }G$.
\end{proposition}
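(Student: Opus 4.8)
The plan is to prove both implications directly from the definitions, using saturation for one direction and transfer for the other. First suppose $A \vartriangleleft B$. For each finite $F \subseteq A$, the set $X_F = \{\eta \in {}^{\ast}G \mid F\eta \subseteq {}^{\ast}B\}$ is internal (it is the hyperextension of $\{g \in G \mid Fg \subseteq B\}$, which is nonempty by the definition of finite embeddability). The family $\{X_F \mid F \subseteq A \text{ finite}\}$ is closed under finite intersections, since $X_{F_1} \cap X_{F_2} \supseteq X_{F_1 \cup F_2}$ and $F_1 \cup F_2$ is again a finite subset of $A$. Hence by $\kappa$-saturation with $\kappa > \max\{|A|,\aleph_0\}$ the intersection $\bigcap_F X_F$ is nonempty, and any $\eta$ in it satisfies $a\eta \in {}^{\ast}B$ for every $a \in A$, i.e.\ $A\eta \subseteq {}^{\ast}B$.

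Conversely, suppose that in some sufficiently saturated model there is $\eta \in {}^{\ast}G$ with $A\eta \subseteq {}^{\ast}B$. Given a finite $F = \{a_1,\dots,a_n\} \subseteq A$, the internal statement \emph{``there exists $x \in {}^{\ast}G$ such that $a_i x \in {}^{\ast}B$ for all $i=1,\dots,n$''} holds (witnessed by $\eta$), so by transfer there exists $g \in G$ with $a_i g \in B$ for all $i$, that is, $Fg \subseteq B$. Since $F$ was an arbitrary finite subset of $A$, this gives $A \vartriangleleft B$.

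I expect no serious obstacle here: the argument is the standard saturation/transfer dictionary, parallel to the proof of the F\o lner approximation part of Proposition~\ref{prop:nonstandard-amenable}. The one point requiring a little care is the internality of $X_F$ in the first direction — one should note that $X_F$ is the hyperextension of a genuine subset of $G$ (definable from the finite set $F$ and the set $B$), so that saturation applies; the finite intersection property is then immediate from the lattice of finite subsets of $A$. It is also worth remarking that the phrase ``sufficiently saturated'' should be read as $\kappa$-saturated for $\kappa > |A|$ (equivalently $\kappa > |G|$), exactly as in the earlier proposition.
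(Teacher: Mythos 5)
Your proof is correct and follows essentially the same route as the paper: the paper applies saturation to the family $\{a^{-1}\,{}^{\ast}B \mid a \in A\}$, which has the finite intersection property precisely because $A \vartriangleleft B$, and your sets $X_F$ are just the finite intersections $\bigcap_{a\in F}a^{-1}\,{}^{\ast}B$ of that family, so the two arguments coincide; the converse via transfer is identical.
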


\begin{proof}
Notice that $A\vartriangleleft B$ if and only if the family $\{a^{-1}B\mid
\,a\in A\}$ has the finite intersection property. So, in any nonstandard
model that satisfies $\kappa$-\emph{saturation} with $\kappa>|A|$, the
intersection $\bigcap_{a\in A}a^{-1} {}^{\ast }B$ is nonempty. If $\eta$ is
an element of this set, then $A\eta\subseteq {}^{\ast }B$. Conversely,
suppose that $A\eta\subseteq {}^{\ast }B$ for some $\eta\in {}^{\ast }G$. If
$H=\{h_1,\ldots,h_n\}$ is a finite subset of $A$, one obtains the existence
of an element $x\in G$ such that $Hx\subseteq B$ by \emph{transfer} from the
statement: \emph{\ \textquotedblleft There exists $\eta\in {}^{\ast }G$ such
that $h_i\eta\in {}^{\ast }B$ for $i=1,\ldots,n$ \textquotedblright}. This
shows that $A\vartriangleleft B$ .
\end{proof}

\medskip It is easily verified that, if $A\vartriangleleft B$, then $%
AA^{-1}\subseteq BB^{-1}$ and $\Delta _{\varepsilon }\left( A\right)
\subseteq \Delta _{\varepsilon }\left( B\right) $ for every $\varepsilon
\geq 0$.

\bigskip

\section{Combinatorial properties in a nonstandard setting}

In this section we prove combinatorial properties in a nonstandard framework
that will be used as key ingredients in the proofs of our main results. The
first one below can be seen as a form of \emph{pigeonhole principle} that
holds in a hyperfinite setting.

\medskip

\begin{lemma}
Let $E$ be a hyperfinite set, and let $\{C_{\lambda }\mid\lambda\in
\Lambda\} $ be a finite family of internal subsets of $E$. Assume that $%
\gamma,\varepsilon$ are non-negative real numbers such that

\smallskip

\begin{itemize}
\item $\varepsilon<\gamma^2$\,;

\smallskip

\item $\mathrm{st}\left( \frac{|C_{\lambda }|}{|E|}\right) \geq \gamma $ for
every $\lambda \in \Lambda $\thinspace ;

\smallskip

\item $|\Lambda|>\frac{\gamma-\varepsilon}{\gamma^{2}-\varepsilon}$.
\end{itemize}

\smallskip \noindent Then there exist distinct $\lambda $, $\mu \in \Lambda $
such that
\begin{equation*}
\mathrm{st}\left( \frac{|C_{\lambda }\cap C_{\mu }|}{|E|}\right)
>\varepsilon \text{.}
\end{equation*}
\end{lemma}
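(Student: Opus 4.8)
The plan is to argue by contradiction, using the second–moment (Cauchy--Schwarz) estimate transferred to the hyperfinite setting. So suppose that for \emph{all} distinct $\lambda ,\mu \in \Lambda $ one has $\mathrm{st}\!\left(|C_{\lambda }\cap C_{\mu }|/|E|\right)\leq \varepsilon $. I would introduce the internal counting function $f\colon E\to {}^{\ast }\N$, $f(x)=|\{\lambda \in \Lambda \mid x\in C_{\lambda }\}|$; this is internal since it is the finite sum $\sum_{\lambda \in \Lambda }\chi _{C_{\lambda }}$ of characteristic functions of internal sets. By transfer of the finite Cauchy--Schwarz inequality, $\big(\sum_{x\in E}f(x)\big)^{2}\leq |E|\cdot \sum_{x\in E}f(x)^{2}$, while expanding $f$ gives the double-counting identities $\sum_{x\in E}f(x)=\sum_{\lambda \in \Lambda }|C_{\lambda }|$ and $\sum_{x\in E}f(x)^{2}=\sum_{\lambda ,\mu \in \Lambda }|C_{\lambda }\cap C_{\mu }|$.

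Next I would normalize. Writing $c_{\lambda }=|C_{\lambda }|/|E|$, $d_{\lambda \mu }=|C_{\lambda }\cap C_{\mu }|/|E|$ and $s=\sum_{\lambda \in \Lambda }c_{\lambda }$, dividing the Cauchy--Schwarz inequality by $|E|^{2}$ and splitting the sum $\sum_{x}f(x)^{2}/|E|$ into its diagonal part (equal to $s$) and off-diagonal part yields $s^{2}\leq s+\sum_{\lambda \neq \mu }d_{\lambda \mu }$. Since $\Lambda $ is finite and each $c_{\lambda },d_{\lambda \mu }$ lies in $[0,1]$, all the quantities involved are finite hyperreals, so we may pass to standard parts and multiply them freely: setting $\sigma =\mathrm{st}(s)=\sum_{\lambda }\mathrm{st}(c_{\lambda })$, we get $\sigma ^{2}\leq \sigma +\sum_{\lambda \neq \mu }\mathrm{st}(d_{\lambda \mu })$. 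Now the hypotheses and the contradiction assumption enter: $\mathrm{st}(c_{\lambda })\geq \gamma $ gives $\sigma \geq |\Lambda |\gamma >0$ (note $\gamma >0$, for otherwise $\varepsilon <\gamma ^{2}=0$ is impossible), and $\mathrm{st}(d_{\lambda \mu })\leq \varepsilon $ over the $|\Lambda |(|\Lambda |-1)$ ordered distinct pairs gives $\sigma ^{2}\leq \sigma +|\Lambda |(|\Lambda |-1)\varepsilon $.

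Finally I would turn this into a bound on $|\Lambda |$ contradicting the third hypothesis. Dividing $\sigma ^{2}-\sigma \leq |\Lambda |(|\Lambda |-1)\varepsilon $ by $\sigma \geq |\Lambda |\gamma $ gives $\sigma \leq 1+\frac{(|\Lambda |-1)\varepsilon }{\gamma }$; combining with $\sigma \geq |\Lambda |\gamma $, multiplying through by $\gamma $ and rearranging yields $|\Lambda |(\gamma ^{2}-\varepsilon )\leq \gamma -\varepsilon $, hence $|\Lambda |\leq \frac{\gamma -\varepsilon }{\gamma ^{2}-\varepsilon }$ since $\gamma ^{2}-\varepsilon >0$ — contradicting $|\Lambda |>\frac{\gamma -\varepsilon }{\gamma ^{2}-\varepsilon }$. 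The computation is routine; the only points that need care are checking that the relevant sums are finite hyperreals (so that $\mathrm{st}$ commutes with the arithmetic and $\mathrm{st}(s^{2})=\sigma ^{2}$), and keeping the diagonal/off-diagonal split exact when normalizing by $|E|^{2}$.
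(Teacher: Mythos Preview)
Your proof is correct and follows essentially the same route as the paper: contradiction, the counting function $f=\sum_{\lambda}\chi_{C_\lambda}$, Cauchy--Schwarz, the diagonal/off-diagonal split of $\sum f^2$, and the same algebra leading to $|\Lambda|(\gamma^{2}-\varepsilon)\leq \gamma-\varepsilon$. The only cosmetic difference is that the paper first reduces (without loss of generality) to the case $\mathrm{st}(|C_\lambda|/|E|)=\gamma$, whereas you keep track of $\sigma=\sum_\lambda \mathrm{st}(c_\lambda)\geq |\Lambda|\gamma$ throughout; both lead to the same contradiction.
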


\begin{proof}
Without loss of generality, let us assume that $\mathrm{st}(|C_{\lambda
}|/|E|)=\gamma $ for every $\lambda \in \Lambda $. Suppose by contradiction
that for all distinct $\lambda \neq \mu $:
\begin{equation*}
\mathrm{st}\left( \frac{|C_{\lambda }\cap C_{\mu }|}{|E|}\right) \leq
\varepsilon \text{.}
\end{equation*}

For $i\in E$, set $a_{i}=\sum_{i\in \Lambda }\chi _{\lambda }(i)$ where $%
\chi _{\lambda }:E\rightarrow \{0,1\}$ denotes the characteristic function
of $C_{\lambda }$. Observe that
\begin{equation*}
\sum_{i\in E}a_{i}=\sum_{\lambda \in \Lambda }\left\vert C_{\lambda
}\right\vert
\end{equation*}
and
\begin{equation*}
\sum_{i\in E}a_{i}^{2}=\sum_{\lambda \in \Lambda }\left\vert C_{\lambda
}\right\vert +\sum_{\lambda \neq \mu }\left\vert C_{\lambda }\cap C_{\mu
}\right\vert \text{.}
\end{equation*}

If we set $b_{i}=1$, then by the \emph{Cauchy-Schwartz inequality},
\begin{eqnarray*}
\left( \sum_{\lambda \in \Lambda }\left\vert C_{\lambda }\right\vert \right)
^{2} &=&\left( \sum_{i\in E}a_{i}b_{i}\right) ^{2} \\
&\leq &\left( \sum_{i\in E}a_{i}^{2}\right) \left( \sum_{i\in
E}b_{i}^{2}\right) \\
&=&\left\vert E\right\vert \left( \sum_{\lambda \in \Lambda }\left\vert
C_{\lambda }\right\vert +\sum_{\lambda \neq \mu }\left\vert C_{\lambda }\cap
C_{\mu }\right\vert \right) \text{.}
\end{eqnarray*}
Dividing by $|E|^{2}$, one gets
\begin{eqnarray*}
\left\vert \Lambda \right\vert ^{2}\gamma ^{2} &\approx &\left(
\sum_{\lambda \in \Lambda }\frac{|C_{\lambda }|}{|E|}\right) ^{2} \\
&\leq &\sum_{\lambda \in \Lambda }\frac{|C_{\lambda }|}{|E|}+\sum_{\lambda
\neq \mu }\frac{|C_{\lambda }\cap C_{\mu }|}{|E|} \\
&\approx &\left\vert \Lambda \right\vert \gamma +\sum_{\lambda \neq \mu }
\frac{|C_{\lambda }\cap C_{\mu }|}{|E|}.
\end{eqnarray*}
As there are $\left\vert \Lambda \right\vert \left( \left\vert \Lambda
\right\vert -1\right) $ ordered pairs $\left( \lambda ,\mu \right) $ such
that $\lambda \neq \mu $, we get
\begin{equation*}
\varepsilon |\Lambda |(|\Lambda |-1)\gtrsim \sum_{\lambda \neq \mu }\frac{
|C_{\lambda }\cap C_{\mu }|}{|E|}\gtrsim |\Lambda |\gamma (|\Lambda |\gamma
-1).
\end{equation*}
Dividing by $|\Lambda |$, we obtain that $\left\vert \Lambda \right\vert
\gamma ^{2}\leq \gamma +\varepsilon \left( \left\vert \Lambda \right\vert
-1\right) $, and finally:
\begin{equation*}
\left\vert \Lambda \right\vert \leq \frac{\gamma -\varepsilon }{\gamma
^{2}-\varepsilon }\text{.}
\end{equation*}
This contradicts our assumptions and concludes the proof.
\end{proof}

\medskip Recall that we called \emph{F\o lner approximation} of $G$ any
nonempty hyperfinite set $E\subseteq {}^{\ast }G$ such that for all $g\in G$
:
\begin{equation*}
\frac{\left\vert gE\bigtriangleup E\right\vert }{\left\vert E\right\vert }
\approx 0\text{.}
\end{equation*}

\smallskip

\begin{lemma}
\label{Lemma: Density delta set}

Let $E$ be a F\o lner approximation of $G$, and suppose that $C$ is an
internal subset of ${}^{\ast }G$ such that $\mathrm{st}\left( \frac{%
\left\vert C\cap E\right\vert }{\left\vert E\right\vert }\right) =\gamma >0$%
. Let $0\leq \varepsilon <\gamma ^{2}$ and $k=\left\lfloor \frac{\gamma
-\varepsilon }{\gamma ^{2}-\varepsilon }\right\rfloor $. Define
\begin{equation*}
\mathcal{D}_{\varepsilon }^{E}\left( C\right) =\left\{ g\in G\left\vert
\,st\left( \frac{\left\vert C\cap gC\cap E\right\vert }{\left\vert
E\right\vert }\right) >\varepsilon \right. \right\} .
\end{equation*}%
Then, for every $P\subseteq G$ and every $g_{0}\in P$ there exists $%
F\subseteq P$ such that $g_{0}\in F$, $\left\vert F\right\vert \leq k$ and $%
P\subseteq F\cdot \mathcal{D}_{\varepsilon }^{E}\left( C\right) $.
\end{lemma}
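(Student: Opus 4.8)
The plan is to translate the covering statement into the language of the pigeonhole Lemma by replacing $\mathcal{D}_{\varepsilon}^{E}(C)$ with a suitable family of internal subsets of $E$. The key preliminary observation is a \emph{translation-invariance} fact for F\o lner approximations: if $g\in G$ and $X\subseteq{}^{\ast}G$ is internal, then
\[
\mathrm{st}\!\left(\frac{|gX\cap E|}{|E|}\right)=\mathrm{st}\!\left(\frac{|X\cap E|}{|E|}\right),
\]
since $|gX\cap gE|=|X\cap E|$ and $|gX\cap E|$ differs from $|gX\cap gE|$ by at most $|gE\bigtriangleup E|$, which is infinitesimal relative to $|E|$. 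Applying this (after left-multiplying by $f$) yields, for all $f,g\in G$,
\[
\mathrm{st}\!\left(\frac{|C\cap f^{-1}gC\cap E|}{|E|}\right)=\mathrm{st}\!\left(\frac{|fC\cap gC\cap E|}{|E|}\right).
\]
Thus, setting $C_{g}:=gC\cap E$ (an internal subset of $E$, since $g\in G$), each $C_{g}$ satisfies $\mathrm{st}(|C_{g}|/|E|)=\gamma$, and $f^{-1}g\in\mathcal{D}_{\varepsilon}^{E}(C)$ if and only if $\mathrm{st}(|C_{f}\cap C_{g}|/|E|)>\varepsilon$ (note $C_{f}\cap C_{g}=fC\cap gC\cap E$). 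Consequently $g\in F\cdot\mathcal{D}_{\varepsilon}^{E}(C)$ holds precisely when $\mathrm{st}(|C_{f}\cap C_{g}|/|E|)>\varepsilon$ for some $f\in F$.

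Call a set $F\subseteq P$ \emph{admissible} if $g_{0}\in F$ and $\mathrm{st}(|C_{f}\cap C_{f'}|/|E|)\le\varepsilon$ for all distinct $f,f'\in F$. The plan is to show (in the next paragraph) that every admissible set has at most $k$ elements, and then to take $F$ admissible of maximum cardinality; this exists because $\{g_{0}\}$ is admissible (there are no distinct pairs to check) and $k\ge1$, the latter because $\frac{\gamma-\varepsilon}{\gamma^{2}-\varepsilon}\ge1$ whenever $\gamma\le1$. Such a maximal $F$ automatically satisfies $P\subseteq F\cdot\mathcal{D}_{\varepsilon}^{E}(C)$: if some $g\in P$ lay outside $F\cdot\mathcal{D}_{\varepsilon}^{E}(C)$, then $\mathrm{st}(|C_{f}\cap C_{g}|/|E|)\le\varepsilon$ for every $f\in F$; in particular $g\notin F$ (taking $f=g$ would give $\mathrm{st}(|C_{g}|/|E|)=\gamma>\varepsilon$, since $\varepsilon<\gamma^{2}\le\gamma$), so $F\cup\{g\}$ would be a strictly larger admissible set, a contradiction.

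It remains to bound the size of an admissible $F$, and this is exactly where the pigeonhole Lemma is used. If $|F|\ge k+1$, pick $k+1$ elements $f\in F$; the corresponding sets $C_{f}$ are internal subsets of $E$ with $\mathrm{st}(|C_{f}|/|E|)\ge\gamma$, we have $\varepsilon<\gamma^{2}$, and $k+1>\frac{\gamma-\varepsilon}{\gamma^{2}-\varepsilon}$ because $k=\lfloor\frac{\gamma-\varepsilon}{\gamma^{2}-\varepsilon}\rfloor$. The Lemma then produces distinct $f,f'$ with $\mathrm{st}(|C_{f}\cap C_{f'}|/|E|)>\varepsilon$, contradicting admissibility. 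Hence $|F|\le\frac{\gamma-\varepsilon}{\gamma^{2}-\varepsilon}$, and since $|F|$ is an integer, $|F|\le k$.

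I expect the main obstacle to be essentially bookkeeping: arranging the left/right placement of translates so that membership in $\mathcal{D}_{\varepsilon}^{E}(C)$ --- defined asymmetrically via $C\cap gC$ --- corresponds to the manifestly symmetric quantity $|C_{f}\cap C_{f'}|$, and verifying that swapping $gE$ for $E$ inside the various counts perturbs them only by an infinitesimal fraction of $|E|$. Once the translation-invariance identity is in place, the rest is the familiar ``greedy cover equals pigeonhole'' argument.
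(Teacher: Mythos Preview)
Your proof is correct and follows essentially the same approach as the paper: both reduce to the pigeonhole lemma applied to the family $\{gC\cap E\}$, using the F\o lner property to identify $\mathrm{st}(|C\cap f^{-1}gC\cap E|/|E|)$ with $\mathrm{st}(|fC\cap gC\cap E|/|E|)$. The only cosmetic difference is that you phrase the construction via a maximal ``admissible'' set while the paper builds $F$ greedily by recursion; these are equivalent, and your explicit isolation of the translation-invariance identity up front is arguably cleaner.
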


\begin{proof}
We define elements $g_{i}$ of $P$ by recursion. Suppose that $g_{i}$ has
been defined for $0\leq i<n$. If $P\subseteq \{g_{0},\ldots ,g_{n-1}\}\cdot
\mathcal{D}_{\varepsilon }^{E}(C)$, then set $g_{n}=g_{n-1}$. Otherwise,
pick
\begin{equation*}
g_{n}\in P\setminus \left( \{g_{0},\ldots ,g_{n-1}\}\cdot \mathcal{D}%
_{\varepsilon }^{E}(C)\right) .
\end{equation*}%
We claim that, $g_{k}=g_{k-1}$, i.e.\ $P\subseteq \{g_{0},\ldots
,g_{k-1}\}\cdot \mathcal{D}_{\varepsilon }^{E}(C)$. Suppose by contradiction
that this is not the case. Then, for every $i<j<k$, we have
\begin{equation*}
g_{j}\notin \{g_{0},\ldots ,g_{i}\}\cdot \mathcal{D}_{\varepsilon
}^{E}\left( C\right) \text{.}
\end{equation*}%
This implies that $g_{i}^{-1}g_{j}\notin \mathcal{D}_{\varepsilon }^{E}(C)$
and
\begin{equation*}
\varepsilon \ \geq \ \text{\textrm{st}}\left( \frac{\left\vert C\cap
g_{i}^{-1}g_{j}C\cap E\right\vert }{\left\vert E\right\vert }\right) \ =\
\text{\textrm{st}}\left( \frac{\left\vert g_{i}C\cap g_{j}C\cap E\right\vert
}{\left\vert E\right\vert }\right) .
\end{equation*}%
By the previous lemma applied to the family $\{g_{i}C\cap E\mid i<k\}$,
there exist $i<j<k$ such that
\begin{equation*}
\frac{\left\vert g_{i}C\cap g_{j}C\cap E\right\vert }{\left\vert
E\right\vert }>\varepsilon .
\end{equation*}%
This is a contradiction.
\end{proof}

\medskip

\begin{lemma}
\label{Lemma: concentration0} Let $U,V\subseteq {}^{\ast }G$ be hyperfinite
sets, and let $C\subseteq U$ and $D\subseteq V$ be internal subsets. Then
there exists $\zeta,\vartheta\in U$ such that
\begin{equation*}
(1)\quad\quad \frac{|D\zeta\cap C|}{|V|}\ \geq\ \frac{|C|}{|U|}\!\cdot\!
\frac{|D|}{|V|}- \max_{d\in D}\frac{|d U\bigtriangleup U|}{|U|}.
\end{equation*}
\begin{equation*}
(2)\quad\quad \frac{|\vartheta D\cap C|}{|V|}\ \geq\ \frac{|C|}{|U|}%
\!\cdot\! \frac{|D|}{|V|}- \max_{d\in D}\frac{|Ud\bigtriangleup U|}{|U|}.
\end{equation*}
\end{lemma}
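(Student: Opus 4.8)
The plan is to prove both inequalities by the same two-step recipe: an internal double-counting identity for a hyperfinite sum over $U$, followed by the pigeonhole principle. Since $U$ is hyperfinite and $C,D$ are internal, the functions $\zeta\mapsto|D\zeta\cap C|$ and $\vartheta\mapsto|\vartheta D\cap C|$ are internal, hence $\sum_{\zeta\in U}|D\zeta\cap C|$ and $\sum_{\vartheta\in U}|\vartheta D\cap C|$ are legitimate internal sums and the pigeonhole statement ``some term is at least the average'' holds by transfer from its finite version. The one real idea is that, because $C\subseteq U$, the escape of $C$ from a translate is controlled by a symmetric difference: for any $d\in{}^{\ast}G$ one has $C\setminus dU\subseteq U\setminus dU\subseteq dU\bigtriangleup U$, so $|dU\cap C|\geq|C|-|dU\bigtriangleup U|$, and symmetrically $|Ud\cap C|\geq|C|-|Ud\bigtriangleup U|$.

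For $(1)$, fix $\zeta$; since right multiplication by $\zeta$ is injective, $|D\zeta\cap C|=|\{d\in D:d\zeta\in C\}|$. Interchanging the order of summation and then using that left multiplication by $d$ is a bijection gives
\[\sum_{\zeta\in U}|D\zeta\cap C|=\sum_{d\in D}|\{\zeta\in U:d\zeta\in C\}|=\sum_{d\in D}|U\cap d^{-1}C|=\sum_{d\in D}|dU\cap C|.\]
Apply the inclusion above to each summand and bound $|dU\bigtriangleup U|\leq\max_{d\in D}|dU\bigtriangleup U|$ to get $\sum_{\zeta\in U}|D\zeta\cap C|\geq|D|\bigl(|C|-\max_{d\in D}|dU\bigtriangleup U|\bigr)$. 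By pigeonhole there is $\zeta\in U$ with $|D\zeta\cap C|\geq\frac{|D|}{|U|}\bigl(|C|-\max_{d\in D}|dU\bigtriangleup U|\bigr)$; dividing by $|V|$ and using $|D|\leq|V|$ (because $D\subseteq V$), so that the factor $\frac{|D|}{|V|}\leq1$ in front of the error term can be dropped, yields exactly $(1)$. Inequality $(2)$ is the mirror image: $|\vartheta D\cap C|=|\{d\in D:\vartheta d\in C\}|$, so $\sum_{\vartheta\in U}|\vartheta D\cap C|=\sum_{d\in D}|U\cap Cd^{-1}|=\sum_{d\in D}|Ud\cap C|$, and the symmetric inclusion together with the same pigeonhole-and-divide step (with $dU\bigtriangleup U$ replaced by $Ud\bigtriangleup U$) gives $(2)$.

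I expect no genuine obstacle beyond bookkeeping. The entire content is the elementary fact that the hypothesis $C\subseteq U$ lets one replace ``$C$ is almost invariant under $d$'' by ``$U$ is almost invariant under $d$''; the only care needed is to keep left and right translates on the correct sides in the two statements and to observe that the averaging in fact delivers a slightly stronger bound, with error term $\frac{|D|}{|V|}\cdot\frac{\max_{d\in D}|dU\bigtriangleup U|}{|U|}$, than the one recorded in the lemma.
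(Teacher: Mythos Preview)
Your proof is correct and follows essentially the same double-counting/averaging argument as the paper: both compute $\sum_{u\in U}|Du\cap C|$ by swapping the order of summation, bound each term $|dU\cap C|\geq|C|-|dU\bigtriangleup U|$ using $C\subseteq U$, and then pick a $\zeta$ beating the average. The only cosmetic difference is that for (2) the paper invokes ``apply (1) to the opposite group of $G$'' while you write out the left/right mirror explicitly; these are the same argument, and your observation that the method actually yields the sharper error term $\frac{|D|}{|V|}\cdot\frac{\max_{d\in D}|dU\bigtriangleup U|}{|U|}$ is also implicit in the paper's computation.
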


\begin{proof}
Let $\chi _{C}:U\rightarrow \{0,1\}$ be the characteristic function of $C$.
For $d\in D$, one has
\begin{equation*}
\frac{1}{|U|}\sum_{u\in U}\chi _{C}(du)\ =\ \frac{|C\cap dU|}{|U|}\ =\ \frac{
|C|-|C\cap (U\setminus dU)|}{|U|}\ \geq \ \frac{|C|}{|U|}-\frac{
|dU\bigtriangleup U|}{|U|}.
\end{equation*}
Then,
\begin{eqnarray*}
\frac{1}{|U|}\sum_{u\in U}\frac{|Du\cap C|}{|V|} &=&\frac{1}{|U|}\sum_{u\in
U}\left( \frac{1}{|V|}\sum_{d\in D}\chi _{C}(du)\right) \\
&=&\ \frac{1}{|V|}\sum_{d\in D}\left( \frac{1}{|U|}\sum_{u\in U}\chi
_{C}(du)\right) \\
&\geq &\frac{1}{|V|}\sum_{d\in D}\left( \frac{|C|}{|U|}-\frac{
|dU\bigtriangleup U|}{|U|}\right) \\
&\geq &\frac{|C|}{|U|}\!\cdot \!\frac{|D|}{|V|}-\max_{d\in
D}|dU\bigtriangleup U|/|U| .
\end{eqnarray*}
Thus for some $\zeta \in U$,
\begin{equation*}
\frac{|D\zeta \cap C|}{|V|}\ \geq \ \frac{|C|}{|U|}\!\cdot \!\frac{|D|}{|V|}
-\max_{d\in D}\frac{|dU\bigtriangleup U|}{|U|}.
\end{equation*}

The second part of the statement is obtained applying the first part to the
opposite group of $G$ (which is amenable as well).
\end{proof}

\medskip

\begin{lemma}
\label{concentration} Let $G$ be an amenable group. Suppose $%
A_{0},A_{1},\ldots ,A_{n}\subseteq G$ are subsets with Banach densities $%
d(A_{i})\geq \alpha _{i}$. Then in every sufficiently saturated nonstandard
model there exist F\o lner approximations $E,F\subseteq {}^{\ast }G$ and
elements $\xi _{1},\ldots ,\xi _{n},\eta _{1},\ldots ,\eta _{n}\in {}^{\ast
}G$ such that
\begin{equation}
\quad \quad \quad \frac{|{}^{\ast }\!A_{0}\,\cap \,\left(
\bigcap_{i=1}^{n}{}^{\ast }\!A_{i}\,\xi _{i}\right) \,\cap \,E|}{|E|}\
\gtrsim \ \prod_{i=0}^{n}\alpha _{i}.  \label{Eq: concentration 1}
\end{equation}%
\begin{equation}
\quad \quad \frac{|{}^{\ast }\!A_{0}\,\cap \,\left( \bigcap_{i=1}^{n}\eta
_{i}\,{}^{\ast }\!A_{i}^{-1}\right) \,\cap \,F|}{|F|}\ \gtrsim \
\prod_{i=0}^{n}\alpha _{i}.  \label{Eq: concentration 2}
\end{equation}
\end{lemma}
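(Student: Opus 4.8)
The plan is to establish inequality~\eqref{Eq: concentration 1} first, constructing the elements $\xi_{1},\dots,\xi_{n}$ one at a time while keeping a \emph{single} F\o lner approximation $E$ fixed throughout the construction, and then to obtain \eqref{Eq: concentration 2} by repeating the argument with part~(2) of Lemma~\ref{Lemma: concentration0} in place of part~(1) and with left-invariance everywhere replaced by right-invariance.

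For \eqref{Eq: concentration 1} the first step is a preparatory choice of auxiliary hyperfinite sets. Using Proposition~\ref{prop:nonstandard-amenable} I fix a F\o lner approximation $E$ of $G$ with $\mathrm{st}\!\left(\frac{|{}^{\ast }\!A_{0}\cap E|}{|E|}\right)=d(A_{0})\geq\alpha_{0}$. Then, for each $j\in\{1,\dots,n\}$, I want a hyperfinite set $E_{j}\subseteq{}^{\ast }G$ that simultaneously satisfies $\mathrm{st}\!\left(\frac{|{}^{\ast }\!A_{j}\cap E_{j}|}{|E_{j}|}\right)\geq\alpha_{j}$ and is \emph{infinitesimally invariant under every element of} $E$, i.e.\ $\frac{|dE_{j}\bigtriangleup E_{j}|}{|E_{j}|}\approx0$ for all $d\in E$. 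Such an $E_{j}$ is produced by transfer: the assertion ``for every $m\in\mathbb{N}$ and every finite $F\subseteq G$ there is a nonempty finite $K\subseteq G$ with $|dK\bigtriangleup K|<|K|/m$ for every $d\in F$ and $|A_{j}\cap K|/|K|>d(A_{j})-1/m$'' holds in $G$ by the definition of Banach density and of F\o lner's condition; transferring it and instantiating $F$ at the hyperfinite set $E$ and $m$ at an infinite hypernatural yields the required $E_{j}$. This preparatory step is the only place where a genuine argument is needed, and is the main obstacle: although $E$ is hyperfinite and may contain arbitrarily nonstandard points, one must still find a hyperfinite set carrying the full Banach density of $A_{j}$ that is so much ``flatter'' than $E$ as to be infinitesimally invariant under all of $E$ — the nonstandard form of the fact that an $(F,\varepsilon)$-invariant set witnessing $d(A_{j})$ can be taken with $F$ arbitrarily large and $\varepsilon$ arbitrarily small.

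Everything after that is bookkeeping with Lemma~\ref{Lemma: concentration0}. Put $B_{0}={}^{\ast }\!A_{0}\cap E$ and, recursively, assume $B_{j-1}={}^{\ast }\!A_{0}\cap\bigcap_{i=1}^{j-1}{}^{\ast }\!A_{i}\xi_{i}\cap E\subseteq E$ has been defined with $\frac{|B_{j-1}|}{|E|}\gtrsim\prod_{i=0}^{j-1}\alpha_{i}$. Apply Lemma~\ref{Lemma: concentration0}(1) with $U=E_{j}$, $C={}^{\ast }\!A_{j}\cap E_{j}$, $V=E$ and $D=B_{j-1}$; since $B_{j-1}\subseteq E$, the error term $\max_{d\in D}\frac{|dU\bigtriangleup U|}{|U|}$ is at most $\max_{d\in E}\frac{|dE_{j}\bigtriangleup E_{j}|}{|E_{j}|}\approx0$, so there is $\zeta\in E_{j}$ with $\frac{|B_{j-1}\zeta\cap C|}{|E|}\gtrsim\frac{|{}^{\ast }\!A_{j}\cap E_{j}|}{|E_{j}|}\cdot\frac{|B_{j-1}|}{|E|}\gtrsim\prod_{i=0}^{j}\alpha_{i}$. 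Set $\xi_{j}=\zeta^{-1}$ and $B_{j}=B_{j-1}\cap{}^{\ast }\!A_{j}\xi_{j}$; from $C\subseteq{}^{\ast }\!A_{j}$ and the identity $B_{j-1}\zeta\cap{}^{\ast }\!A_{j}=(B_{j-1}\cap{}^{\ast }\!A_{j}\zeta^{-1})\zeta$ one gets $|B_{j}|=|B_{j-1}\cap{}^{\ast }\!A_{j}\xi_{j}|\geq|B_{j-1}\zeta\cap C|$, hence $\frac{|B_{j}|}{|E|}\gtrsim\prod_{i=0}^{j}\alpha_{i}$ and the recursion proceeds. After $n$ steps, $B_{n}={}^{\ast }\!A_{0}\cap\bigcap_{i=1}^{n}{}^{\ast }\!A_{i}\xi_{i}\cap E\subseteq E$ satisfies $\frac{|B_{n}|}{|E|}\gtrsim\prod_{i=0}^{n}\alpha_{i}$, which is \eqref{Eq: concentration 1}.

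For \eqref{Eq: concentration 2} I would argue on the other side. Fix a F\o lner approximation $F$ with $\mathrm{st}\!\left(\frac{|{}^{\ast }\!A_{0}\cap F|}{|F|}\right)\geq\alpha_{0}$; by the same transfer argument, now instantiating at the hyperfinite set $F^{-1}$, obtain for each $j$ a hyperfinite $H_{j}\subseteq{}^{\ast }G$ with $\mathrm{st}\!\left(\frac{|{}^{\ast }\!A_{j}\cap H_{j}|}{|H_{j}|}\right)\geq\alpha_{j}$ and $\frac{|dH_{j}\bigtriangleup H_{j}|}{|H_{j}|}\approx0$ for all $d\in F^{-1}$, and set $W_{j}=H_{j}^{-1}$, so that $\frac{|{}^{\ast }\!A_{j}^{-1}\cap W_{j}|}{|W_{j}|}=\frac{|{}^{\ast }\!A_{j}\cap H_{j}|}{|H_{j}|}\gtrsim\alpha_{j}$ and $\frac{|W_{j}d\bigtriangleup W_{j}|}{|W_{j}|}=\frac{|d^{-1}H_{j}\bigtriangleup H_{j}|}{|H_{j}|}\approx0$ for all $d\in F$. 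Starting from $B_{0}'={}^{\ast }\!A_{0}\cap F$, at step $j$ apply Lemma~\ref{Lemma: concentration0}(2) with $U=W_{j}$, $C={}^{\ast }\!A_{j}^{-1}\cap W_{j}$, $V=F$, $D=B_{j-1}'$, whose error term $\max_{d\in D}\frac{|Ud\bigtriangleup U|}{|U|}\leq\max_{d\in F}\frac{|W_{j}d\bigtriangleup W_{j}|}{|W_{j}|}$ is infinitesimal; obtain $\vartheta\in W_{j}$, set $\eta_{j}=\vartheta^{-1}$ and $B_{j}'=B_{j-1}'\cap\eta_{j}{}^{\ast }\!A_{j}^{-1}$, and use $\vartheta B_{j-1}'\cap{}^{\ast }\!A_{j}^{-1}=\vartheta(B_{j-1}'\cap\eta_{j}{}^{\ast }\!A_{j}^{-1})$ together with $C\subseteq{}^{\ast }\!A_{j}^{-1}$ to get $|B_{j}'|\geq|\vartheta B_{j-1}'\cap C|$, hence $\frac{|B_{j}'|}{|F|}\gtrsim\prod_{i=0}^{j}\alpha_{i}$. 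After $n$ steps $\frac{|B_{n}'|}{|F|}\gtrsim\prod_{i=0}^{n}\alpha_{i}$, which is \eqref{Eq: concentration 2}.
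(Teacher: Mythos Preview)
Your proof is correct and follows essentially the same approach as the paper's: both fix a F\o lner approximation $E$ realizing $d(A_{0})$, use transfer to produce auxiliary hyperfinite sets that simultaneously witness $d(A_{j})\geq\alpha_{j}$ and are infinitesimally invariant under the relevant hyperfinite set, and then repeatedly apply Lemma~\ref{Lemma: concentration0} to manufacture the translating elements. The only cosmetic difference is that you prepare all the auxiliary sets $E_{j}$ at the outset, whereas the paper produces the corresponding set $U$ at each inductive step; the underlying idea and the use of Lemma~\ref{Lemma: concentration0} are identical.
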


\begin{proof}
We proceed by induction. Let us start with property $(1)$. The base $n=0$ is
given by the nonstandard characterization of Banach density. Now let the
subsets $A_{0},A_{1},\ldots ,A_{n+1}\subseteq G$ be given where $%
d(A_{i})\geq \alpha _{i}$. By the inductive hypothesis there exists a F\o %
lner approximation $V\subseteq {}^{\ast }G$ and elements $\xi _{1},\ldots
,\xi _{n}\in {}^{\ast }G$ that satisfy $|{}^{\ast }\!A_{0}\cap
(\bigcap_{i=1}^{n}{}^{\ast }\!A_{i}\,\xi _{i})\cap V|/|V|\gtrsim
\prod_{i=0}^{n}\alpha _{i}$. We now want to find a F\o lner approximation $U$
that witnesses $d(A_{n+1})\geq \alpha _{n+1}$ and with the additional
feature of being \textquotedblleft almost invariant" with respect to left
translates by elements in $V$. To this purpose, pick a hyperfinite $%
V^{\prime }\supseteq V\cup G$ (notice that this is possible by $\kappa $%
-saturation with $\kappa >|G|$). Consider the following property that
directly follows from the definition of F\o lner density: \textquotedblleft
\emph{For every $k\in \mathbb{N}$ and every finite $H\subseteq G$ there
exists a nonempty finite $K\subseteq G$ which is $(H,1/k)$-invariant and
such that the relative density $|A_{n+1}\cap K|/|K|>\alpha _{n+1}-1/k$}%
\textquotedblright\ . If $\nu \in {}^{\ast }\mathbb{N}$, by \emph{transfer}
we get a nonempty hyperfinite $U\subseteq {}^{\ast }G$ that is $\left(
V^{\prime },\frac{1}{\nu }\right) $-invariant (and, in particular, is a F\o %
lner approximation of $G$) and such that
\begin{equation*}
\frac{|{}^{\ast }\!A_{n+1}\cap U|}{|U|}>\alpha _{n+1}-\frac{1}{\nu }\approx
\alpha _{n+1}\text{.}
\end{equation*}%
Now apply $(1)$ of the previous lemma to the internal sets $C={}^{\ast
}\!A_{n+1}\cap U\subseteq U$ and $D={}^{\ast }\!A_{0}\cap
(\bigcap_{i=1}^{n}{}^{\ast }\!A_{i}\,\xi _{i})\cap V\subseteq V$, and pick
an element $\zeta \in U$ such that
\begin{equation*}
\frac{|D\zeta \cap C|}{|V|}\ \geq \ \frac{|C|}{|U|}\!\cdot \!\frac{|D|}{|V|}%
-\max_{d\in D}\frac{|dU\bigtriangleup U|}{|U|}\ \geq \ \frac{|C|}{|U|}%
\!\cdot \!\frac{|D|}{|V|}-\frac{1}{\nu }\ \gtrsim \ \prod_{i=0}^{n+1}\alpha
_{i}.
\end{equation*}%
This yields the conclusion with $E=V$. In fact, by letting $\xi _{n+1}=\zeta
^{-1}$
\begin{equation*}
\frac{|D\zeta \cap C|}{\left\vert V\right\vert }\leq \frac{|D\zeta \cap
{}^{\ast }\!A_{n+1}|}{\left\vert V\right\vert }=\frac{|D\cap {}^{\ast
}\!A_{n+1}\xi _{n+1}|}{\left\vert V\right\vert }=\frac{\left\vert {}^{\ast
}\!A_{0}\,\cap \,\left( \bigcap_{i=1}^{n+1}{}^{\ast }\!A_{i}\,\xi
_{i}\right) \,\cap \,V\right\vert }{\left\vert V\right\vert }.
\end{equation*}

As for $(2)$, we proceed in a similar way as above by considering sets of
inverses. Precisely, let $V\subseteq {}^{\ast }G$ be a F\o lner
approximation of $G$ and $\eta _{1},\ldots ,\eta _{n}$ be elements of $%
{}^{\ast }G$ that satisfy
\begin{equation*}
\frac{|{}^{\ast }\!A_{0}\,\cap \,\left( \bigcap_{i=1}^{n}\eta _{i}\,{}^{\ast
}\!A_{i}^{-1}\right) \,\cap \,V|}{|V|}\ \gtrsim \ \prod_{i=0}^{n}\alpha _{i}.
\end{equation*}
Pick a F\o lner approximation $U$ that witnesses $d(A_{n+1})\geq \alpha
_{n+1}$ and with the additional feature of being \textquotedblleft almost
invariant" with respect to translates by elements in the set of inverses $%
V^{-1}$, i.e.\ $|{}^{\ast }\!A_{n+1}\cap U|/|U|\gtrsim \alpha _{n+1}$ and $%
|xU\bigtriangleup U|/|U|\approx 0$ for all $x\in V^{-1}$. Then apply $(2)$
of the previous lemma to the internal sets $C={}^{\ast }\!A_{n+1}^{-1}\cap
U^{-1}\subseteq U^{-1}$ and $D={}^{\ast }\!A_{0}\cap (\bigcap_{i=1}^{n}\eta
_{i}\,{}^{\ast }\!A_{i}^{-1})\cap V\subseteq V$, and get the existence of an
element $\vartheta \in U^{-1}$ such that
\begin{eqnarray*}
\frac{|\vartheta D\cap C|}{|V|}\ &\geq &\ \frac{|C|}{|U^{-1}|}\!\cdot \!
\frac{|D|}{|V|}-\max_{d\in D}\frac{|U^{-1}d\bigtriangleup U^{-1}|}{|U^{-1}|}
\\
&=&\ \frac{|C^{-1}|}{|U|}\!\cdot \!\frac{|D|}{|V|}-\max_{d\in D^{-1}}\frac{
|dU\bigtriangleup U|}{|U|} \\
&\gtrsim &\prod_{k=1}^{n+1}\alpha _{k}\text{.}
\end{eqnarray*}
Since
\begin{equation*}
\frac{|\vartheta D\cap C|}{\left\vert V\right\vert }\ \leq \ \frac{
|\vartheta D\cap {}^{\ast }\!A ^{-1}_{n+1}|}{\left\vert V\right\vert }\ =\
\frac{|D\cap \vartheta ^{-1}{}^{\ast }\!A ^{-1}_{n+1}|}{\left\vert
V\right\vert }\ =\ \frac{\left\vert {}^{\ast }\!A_{0}\cap \left(
\bigcap_{i=0}^{n+1}\eta _{i}{}^{\ast }\!A ^{-1}_{n+1}\right) \cap
V\right\vert }{\left\vert V\right\vert }\text{,}
\end{equation*}
the statement is proved by letting $F=V$ and $\eta _{n+1}=\vartheta ^{-1}$.
\end{proof}

\bigskip

\section{Intersection properties of Delta-sets and Jin's theorem}

The nonstandard lemmas of the previous section entail a general result about
intersections of density Delta-sets.

\medskip

\begin{theorem}
\label{Theorem: syndetic delta sets} Suppose that, for $i\leq n$, $A_{i}$ is
a subset of $G$ of positive Banach density $\alpha _{i}$. Let $0\leq
\varepsilon <\beta ^{2}$ where $\beta =\prod_{i=1}^{n}\alpha _{i}$, $%
P\subseteq G$ and $g_{0}\in P$. If $r=\left\lfloor {\frac{\beta -\varepsilon
}{\beta ^{2}-\varepsilon }}\right\rfloor $, then there exists a finite $%
L\subseteq G$ such that $\left\vert L\right\vert \leq r$, $g_{0}\in L$ and $%
P\subseteq L\cdot \left( \bigcap_{i=1}^{n}\Delta _{\varepsilon
}(A_{i})\right) $.
\end{theorem}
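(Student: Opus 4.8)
The plan is to reduce the statement to the hyperfinite pigeonhole Lemma~\ref{Lemma: Density delta set}, applied to a single internal set $C$ that simultaneously controls all of $A_{1},\dots,A_{n}$. Concretely, I would first invoke Lemma~\ref{concentration}, part~(1), applied to the $n$ sets $A_{1},\dots,A_{n}$ with $A_{1}$ playing the role of the distinguished set ``$A_{0}$'' there. This yields, in any sufficiently saturated model, a F\o lner approximation $E\subseteq{}^{\ast}G$ and elements $\xi_{2},\dots,\xi_{n}\in{}^{\ast}G$ such that, setting $\xi_{1}=1_{G}$ (the identity of $G$) and $C=\bigcap_{i=1}^{n}{}^{\ast}\!A_{i}\,\xi_{i}$, the internal set $C$ satisfies $\gamma:=\mathrm{st}\bigl(|C\cap E|/|E|\bigr)\ge\prod_{i=1}^{n}\alpha_{i}=\beta>0$. (The degenerate case $n=0$ is trivial, with $L=\{g_{0}\}$.)

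Next I would apply Lemma~\ref{Lemma: Density delta set} to this pair $(E,C)$, to $\varepsilon$ (which satisfies $0\le\varepsilon<\beta^{2}\le\gamma^{2}$), and to the given $P$ and $g_{0}\in P$. This produces $L\subseteq P$ with $g_{0}\in L$, $|L|\le\lfloor(\gamma-\varepsilon)/(\gamma^{2}-\varepsilon)\rfloor$, and $P\subseteq L\cdot\mathcal{D}_{\varepsilon}^{E}(C)$. Since $t\mapsto(t-\varepsilon)/(t^{2}-\varepsilon)$ is decreasing on $(\sqrt{\varepsilon},1]$ (an elementary computation) and $\gamma\ge\beta>\sqrt{\varepsilon}$, this gives $|L|\le\lfloor(\beta-\varepsilon)/(\beta^{2}-\varepsilon)\rfloor=r$, as required.

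It then remains to establish the inclusion $\mathcal{D}_{\varepsilon}^{E}(C)\subseteq\bigcap_{i=1}^{n}\Delta_{\varepsilon}(A_{i})$; this is the real content and the step I expect to need care. Fix $g\in\mathcal{D}_{\varepsilon}^{E}(C)$ and $i\in\{1,\dots,n\}$. From $C\subseteq{}^{\ast}\!A_{i}\,\xi_{i}$ and the fact that $g$ is standard, one gets $C\cap gC\subseteq\bigl({}^{\ast}\!A_{i}\cap{}^{\ast}(gA_{i})\bigr)\xi_{i}={}^{\ast}(A_{i}\cap gA_{i})\,\xi_{i}$, and hence, since right translation by $\xi_{i}^{-1}$ is an internal bijection with $|E\xi_{i}^{-1}|=|E|$,
\[
\frac{|C\cap gC\cap E|}{|E|}\ \le\ \frac{\bigl|{}^{\ast}(A_{i}\cap gA_{i})\,\xi_{i}\cap E\bigr|}{|E|}\ =\ \frac{\bigl|{}^{\ast}(A_{i}\cap gA_{i})\cap E\xi_{i}^{-1}\bigr|}{|E\xi_{i}^{-1}|}.
\]
The point is that $E\xi_{i}^{-1}$ is again a F\o lner approximation of $G$, because $|h(E\xi_{i}^{-1})\bigtriangleup(E\xi_{i}^{-1})|=|(hE\bigtriangleup E)\xi_{i}^{-1}|=|hE\bigtriangleup E|$ for every $h\in G$; so by the nonstandard description of Banach density in Proposition~\ref{prop:nonstandard-amenable},
\[
d(A_{i}\cap gA_{i})\ \ge\ \mathrm{st}\!\left(\frac{\bigl|{}^{\ast}(A_{i}\cap gA_{i})\cap E\xi_{i}^{-1}\bigr|}{|E\xi_{i}^{-1}|}\right)\ \ge\ \mathrm{st}\!\left(\frac{|C\cap gC\cap E|}{|E|}\right)\ >\ \varepsilon,
\]
so $g\in\Delta_{\varepsilon}(A_{i})$. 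As $i$ was arbitrary, $g\in\bigcap_{i=1}^{n}\Delta_{\varepsilon}(A_{i})$, which combined with $P\subseteq L\cdot\mathcal{D}_{\varepsilon}^{E}(C)$ finishes the argument. The only genuinely non-bookkeeping points are this last inclusion and the monotonicity estimate controlling $|L|$; everything else follows mechanically from the quoted lemmas.
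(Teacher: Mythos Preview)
Your proof is correct and follows essentially the same route as the paper's: invoke Lemma~\ref{concentration} to obtain $E$ and the $\xi_i$'s, set $C=\bigcap_i{}^{\ast}\!A_i\xi_i$, verify $\mathcal{D}_{\varepsilon}^{E}(C)\subseteq\bigcap_i\Delta_\varepsilon(A_i)$ via the right translate $E\xi_i^{-1}$, and then apply Lemma~\ref{Lemma: Density delta set}. Your only departures are cosmetic (taking $A_1$ as the distinguished set rather than $A_0=G$) or make explicit what the paper leaves tacit (the monotonicity of $t\mapsto(t-\varepsilon)/(t^2-\varepsilon)$ and the fact that $E\xi_i^{-1}$ is again a F\o lner approximation).
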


\begin{proof}
By Lemma \ref{concentration} where $A_{0}=G$, we can pick a F\o lner
approximation $E\subseteq {}^{\ast }G$ and elements $\xi _{1},\ldots ,\xi
_{n}\in {}^{\ast }G$ such that
\begin{equation*}
\frac{\left\vert (\bigcap_{i=1}^{n}{}{}^{\ast }\!A_{i}\xi _{i})\cap
E\right\vert }{|E|}\ \gtrsim \ \beta .
\end{equation*}%
Define the internal set $C=\bigcap_{i=1}^{n}{}{}^{\ast }\!A_{i}\xi _{i}$ and
observe that
\begin{equation*}
\mathcal{D}_{\varepsilon }^{E}(C)\ \subseteq \ \bigcap_{i=1}^{n}\Delta
_{\varepsilon }({}^{\ast }\!A_{i}).
\end{equation*}%
To see this, notice that if $g\in \mathcal{D}_{\varepsilon }^{E}(C)$ then
for every $j=1,\ldots ,n$:
\begin{eqnarray*}
\varepsilon  &<&\frac{|C\cap gC\cap E|}{|E|}\ =\ \frac{\left\vert \left(
\bigcap_{i=1}^{n}{}^{\ast }\!A_{i}\xi _{i}\right) \cap g\left(
\bigcap_{i=1}^{n}{}^{\ast }\!A_{i}\xi _{i}\right) \cap E\right\vert }{|E|} \\
&\leq &\frac{\left\vert {}^{\ast }\!A_{j}\xi _{j}\cap g\,{}^{\ast
}\!A_{j}\xi _{j}\cap E\right\vert }{|E|}\ =\ \frac{|{}^{\ast }(A_{j}\cap
gA_{j})\cap E\xi _{j}^{-1}|}{|E\xi _{j}^{-1}|}\text{.}
\end{eqnarray*}%
Now apply Lemma \ref{Lemma: Density delta set} to $C$ and get a finite $%
L\subseteq P$ such that $|L|\leq r$, $g_{0}\in L$ and
\begin{equation*}
P\subseteq L\cdot \mathcal{D}_{\varepsilon }^{E}(C)\subset L\cdot
\bigcap_{i=1}^{n}\Delta _{\varepsilon }\left( {}^{\ast }\!A_{i}\right) \text{%
.}\qedhere
\end{equation*}
\end{proof}

\medskip By applying Theorem \ref{Theorem: syndetic delta sets} where $P=G$,
one immediately obtains the following

\medskip

\begin{corollary}
Under the assumptions of Theorem \ref{Theorem: syndetic delta sets}, $%
\bigcap_{i=1}^{n}\Delta _{\varepsilon }(A_{i})$ is $r$-syndetic and, as a
consequence, its lower Banach density is at least $1/r$.
\end{corollary}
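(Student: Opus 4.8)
The plan is to read this off directly from Theorem~\ref{Theorem: syndetic delta sets} by specializing the auxiliary set $P$ appearing there.

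First I would apply Theorem~\ref{Theorem: syndetic delta sets} with $P=G$ and with $g_{0}$ taken to be any element of $G$ (the group being nonempty, such a $g_0$ exists). The theorem then produces a finite set $L\subseteq G$ with $|L|\leq r$ such that $G\subseteq L\cdot\bigl(\bigcap_{i=1}^{n}\Delta_{\varepsilon}(A_{i})\bigr)$. Since the reverse inclusion is automatic, this gives the equality $G=L\cdot\bigl(\bigcap_{i=1}^{n}\Delta_{\varepsilon}(A_{i})\bigr)$; in other words, at most $r$ left translates of $\bigcap_{i=1}^{n}\Delta_{\varepsilon}(A_{i})$ suffice to cover $G$, which is precisely the statement that $\bigcap_{i=1}^{n}\Delta_{\varepsilon}(A_{i})$ is $r$-syndetic.

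For the density estimate I would invoke the elementary fact recorded just after the definition of Banach density, namely that a $k$-syndetic subset of $G$ has lower Banach density at least $1/k$. Applying this with $k=r$ to the set $A:=\bigcap_{i=1}^{n}\Delta_{\varepsilon}(A_{i})$, which we have just shown to be $r$-syndetic, yields $\underline{d}(A)\geq 1/r$, as claimed.

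There is no genuine obstacle here: the entire content is already contained in Theorem~\ref{Theorem: syndetic delta sets}, and the only points worth noting are that the theorem is stated for an \emph{arbitrary} $P\subseteq G$ (so the choice $P=G$ is legitimate) and that $G\neq\emptyset$ (so a base point $g_{0}$ is available). One could alternatively observe that an $r$-syndetic set is in particular piecewise $r$-syndetic, but that route only bounds the \emph{upper} Banach density; the sharper lower-density bound genuinely uses syndeticity, so I would keep the argument as above.
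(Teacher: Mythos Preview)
Your proposal is correct and matches the paper's approach exactly: the paper simply remarks that the corollary follows by applying Theorem~\ref{Theorem: syndetic delta sets} with $P=G$, and your write-up just spells out that one-line observation together with the standard fact that $r$-syndeticity implies $\underline{d}\geq 1/r$.
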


\medskip For $k\in\mathbb{N}$, denote by

\smallskip

\begin{itemize}
\item $A^{(k)}=\{g^{k}\mid g\in A\}$ the set of $k$-th powers of elements of
$A$.

\smallskip

\item $\sqrt[k]{A}=\{g\in G\mid g^k\in A\}$ the set of $k$-th roots of
elements of $A$.
\end{itemize}

\medskip

\begin{corollary}
Under the assumptions of Theorem \ref{Theorem: syndetic delta sets}, for
every $k\in\mathbb{N}$ the intersection $\bigcap_{i=1}^n\sqrt[k]{
\Delta_\varepsilon(A_i)}$ is $r$-syndetic and, as a consequence, its lower
Banach density is at least $1/r$.
\end{corollary}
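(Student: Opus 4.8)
The plan is to reduce to Theorem \ref{Theorem: syndetic delta sets} by applying it to the set $P=G^{(k)}=\{g^{k}\mid g\in G\}$ and then transporting the resulting covering of $P$ back to a covering of $G$. First, note the set-theoretic identity $\bigcap_{i=1}^{n}\sqrt[k]{\Delta_{\varepsilon}(A_{i})}=\sqrt[k]{D}$, where $D:=\bigcap_{i=1}^{n}\Delta_{\varepsilon}(A_{i})$: indeed $g^{k}\in D$ iff $g^{k}\in\Delta_{\varepsilon}(A_{i})$ for every $i$, i.e.\ iff $g\in\sqrt[k]{\Delta_{\varepsilon}(A_{i})}$ for every $i$. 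Hence it suffices to show that $\sqrt[k]{D}$ is $r$-syndetic, since then the lower bound $1/r$ on its lower Banach density follows from the elementary remark recorded after the definition of Banach density.

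Next, I would apply Theorem \ref{Theorem: syndetic delta sets} with $P=G^{(k)}$ and $g_{0}=e$ (the identity of $G$, which lies in $G^{(k)}$). The proof of that theorem in fact yields a covering set $L$ contained in $P$, as it invokes Lemma \ref{Lemma: Density delta set}, whose conclusion gives $F\subseteq P$; so one obtains a finite $L\subseteq G^{(k)}$ with $e\in L$, $|L|\le r$, and $G^{(k)}\subseteq L\cdot D$. For each $\ell\in L$ fix $t_{\ell}\in G$ with $t_{\ell}^{\,k}=\ell$ (choosing $t_{e}=e$), and put $L'=\{t_{\ell}\mid\ell\in L\}$, so that $e\in L'$ and $|L'|\le|L|\le r$.

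Finally I would check that $G=L'\cdot\sqrt[k]{D}$, which is exactly the asserted $r$-syndeticity. Given $g\in G$, one has $g^{k}\in G^{(k)}\subseteq L\cdot D$, say $g^{k}=\ell d$ with $\ell\in L$ and $d\in D$; then $(t_{\ell}^{-1}g)^{k}=t_{\ell}^{-k}g^{k}=\ell^{-1}g^{k}=d\in D$, so $t_{\ell}^{-1}g\in\sqrt[k]{D}$ and $g\in t_{\ell}\cdot\sqrt[k]{D}\subseteq L'\cdot\sqrt[k]{D}$. The step I expect to be delicate is precisely this last computation: it relies on the identity $(t_{\ell}^{-1}g)^{k}=t_{\ell}^{-k}g^{k}$, which holds whenever $t_{\ell}$ and $g$ commute --- in particular when $G$ is abelian, which already covers the motivating case $G=\mathbb{Z}$ of \cite{dn} --- but is not available for a general group. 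For an arbitrary amenable $G$ one would instead have to re-run the recursion of Lemma \ref{Lemma: Density delta set} keeping track of the $k$-th power map directly, the main point being to arrange the recursively chosen elements so that the Cauchy--Schwarz pigeonhole argument underlying Lemma \ref{Lemma: Density delta set} still applies to a genuinely symmetric family of hyperfinite translates.
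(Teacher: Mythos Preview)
Your approach is exactly the paper's: apply Theorem~\ref{Theorem: syndetic delta sets} with $P=G^{(k)}$ (using that Lemma~\ref{Lemma: Density delta set} returns $L\subseteq P$), lift $L$ to $k$-th roots, and translate the covering of $G^{(k)}$ by $L\cdot D$ into a covering of $G$ by the roots times $\sqrt[k]{D}$.

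Your caution about the step $(t_\ell^{-1}g)^{k}=t_\ell^{-k}g^{k}$ is well placed. The paper's proof performs the identical manoeuvre: from $g^{k}=h^{k}x$ it passes ``equivalently'' to $(h^{-1}g)^{k}=x$, which is the same identity and likewise requires $h$ and $g$ to commute. So the paper's argument, as written, is valid for abelian $G$ (the setting of the motivating results in \cite{dn}) but does not go through verbatim for an arbitrary amenable group; you have spotted a genuine issue in the published proof rather than a deficiency of your own write-up. Your suggested fix---re-running the recursion of Lemma~\ref{Lemma: Density delta set} tailored to the $k$-th power map---is a reasonable direction, though making it work in full generality is not immediate.
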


\begin{proof}
Apply Theorem \ref{Theorem: syndetic delta sets} with $P=G^{(k)}$ and get
the existence of a finite set $L\subseteq G^{(k)}$ such that $|L|\leq \frac{
\beta -\varepsilon }{\beta ^{2}-\varepsilon }$ and $G^{(k)}\subseteq L\cdot
\left( \bigcap_{i=1}^{n}\Delta _{\varepsilon }(A_{i})\right) $. Pick $%
H\subseteq G$ such that $|H|=|L|$ and $H^{(k)}=L$. Then for every $g\in G$
one has $g^{k}=h^{k}\cdot x$ for suitable $h\in H$ and $x\in
\bigcap_{i=1}^{n}\Delta _{\varepsilon }(A_{i})$. Equivalently, for every $%
g\in G$ there exists $h\in H$ such that $(h^{-1}g)^{k}\in
\bigcap_{i=1}^{n}\Delta _{\varepsilon }(A_{i})$, and hence
\begin{equation*}
h^{-1}g\in \bigcap_{i=1}^{n}\sqrt[k]{\Delta _{\varepsilon }\left(
A_{i}\right) }\text{.}
\end{equation*}
This shows that $G=H\cdot \left( \bigcap_{i=1}^{n}\sqrt[k]{\Delta
_{\varepsilon }(A_{i})}\right) $.
\end{proof}

\medskip Next, we prove the existence of an explicit bound in Jin's theorem
that only depends on the densities of the given sets.

\medskip

\begin{theorem}
\label{Theorem: Jin with bound} Let $G$ be an amenable group. If $X\subseteq
G$ is infinite, $w\in X$ and $A,B\subseteq G$ have positive Banach densities
$d(A)=\alpha$ and $d(B)=\beta$ respectively, then there exists a finite $%
F\subset X$ such that:

\smallskip

\begin{itemize}
\item $w\in F$;

\smallskip

\item $|F|\leq\frac{1}{\alpha\beta}$;

\smallskip

\item $X\vartriangleleft F A B$.
\end{itemize}
\end{theorem}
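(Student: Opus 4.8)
The plan is to transport the problem into a sufficiently saturated nonstandard model and exploit the "concentration" Lemma~\ref{concentration} in the case $n=1$, together with the nonstandard characterization of finite embeddability. First I would apply Lemma~\ref{concentration} with $A_0=A$ and $A_1=B$ (so that $\prod_{i=0}^{1}\alpha_i=\alpha\beta$) to obtain a F\o lner approximation $E\subseteq{}^{\ast}G$ and an element $\xi\in{}^{\ast}G$ such that
\begin{equation*}
\mathrm{st}\!\left(\frac{|{}^{\ast}\!A\cap {}^{\ast}\!B\,\xi\cap E|}{|E|}\right)\ \geq\ \alpha\beta.
\end{equation*}
Set $C={}^{\ast}\!A\cap{}^{\ast}\!B\,\xi$; this is an internal subset of ${}^{\ast}G$ with $\mathrm{st}(|C\cap E|/|E|)\geq\alpha\beta$.

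Next I would run a greedy covering argument on $X$ entirely analogous to the proof of Lemma~\ref{Lemma: Density delta set}, but using the pigeonhole Lemma with parameters $\gamma=\alpha\beta$ and $\varepsilon=0$. Concretely, starting from $w\in X$, recursively pick $x_0=w,x_1,x_2,\dots$ in $X$ with $x_{n}\notin\{x_0,\dots,x_{n-1}\}\cdot\{g\in G\mid \mathrm{st}(|C\cap gC\cap E|/|E|)>0\}$ as long as that is possible; the pigeonhole Lemma with $\varepsilon=0<\gamma^{2}$ forces the process to stop after at most $\lfloor\gamma/\gamma^{2}\rfloor=\lfloor 1/(\alpha\beta)\rfloor\le 1/(\alpha\beta)$ steps, because otherwise we would have $|\Lambda|=k$ translates $x_iC\cap E$ each of relative size $\geq\gamma$ but pairwise "almost disjoint", contradicting $k>\gamma/\gamma^2$. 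This yields a finite $F\subseteq X$ with $w\in F$, $|F|\le 1/(\alpha\beta)$, and $X\subseteq F\cdot\{g\in G\mid |C\cap gC\cap E|>0\}$ — in particular, for every $x\in X$ there are $f\in F$ and $\theta\in E$ with $\theta\in C$ and $f^{-1}x\,\theta\in C$.

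It then remains to check that this covering property is exactly the nonstandard witness for $X\vartriangleleft FAB$. Given a finite $H=\{h_1,\dots,h_m\}\subseteq X$, I would want a single $\eta\in{}^{\ast}G$ with $H\eta\subseteq{}^{\ast}(FAB)={}^{\ast}F\,{}^{\ast}\!A\,{}^{\ast}\!B$; here the point is that since $C={}^{\ast}\!A\cap{}^{\ast}\!B\xi$, having $\theta\in C$ and $f^{-1}h\theta\in C$ gives $f^{-1}h\theta\in{}^{\ast}\!A$ and $\theta\in{}^{\ast}\!B\xi$, i.e.\ $\theta\xi^{-1}\in{}^{\ast}\!B$, whence $h\cdot(\theta\xi^{-1})=f\cdot(f^{-1}h\theta)\cdot\xi^{-1}\in{}^{\ast}F\,{}^{\ast}\!A\,{}^{\ast}\!B$. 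The subtlety is that $f$ and $\theta$ a priori depend on $h$; to get a uniform $\eta$ one argues that the greedy construction actually produces, for the \emph{fixed} finite set $H\subseteq X$, a common element — i.e.\ one applies the covering not to all of $X$ but notes that the relation $g\in\{g\mid |C\cap gC\cap E|>0\}$ together with saturation lets us realize a single internal $\theta\in E$ working simultaneously for all $h\in H$ sharing a given $f$, and then the $\eta=\theta\xi^{-1}$ obtained this way (for the finitely many $f\in F$) can be merged. The main obstacle I expect is precisely this uniformization: pinning down the correct internal statement so that the finitely many choices of $\theta$ collapse to one $\eta\in{}^{\ast}G$ valid for all of $H$ at once; once the right internal formula is written, transfer and $\kappa$-saturation ($\kappa>|X|$) do the rest, and the bound $|F|\le 1/(\alpha\beta)$ comes for free from the pigeonhole count.
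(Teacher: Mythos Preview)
Your overall strategy --- concentration lemma, then the greedy covering of Lemma~\ref{Lemma: Density delta set} with $\varepsilon=0$, then the nonstandard criterion for $\vartriangleleft$ --- is exactly the paper's. The gap is that you invoke the wrong half of Lemma~\ref{concentration}: you take $C={}^{\ast}\!A\cap{}^{\ast}\!B\,\xi$ via \eqref{Eq: concentration 1}, whereas the paper takes $C={}^{\ast}\!A\cap\eta\,{}^{\ast}\!B^{-1}$ via \eqref{Eq: concentration 2}. This single choice is precisely what manufactures the ``uniformization'' obstacle you worry about, and your proposed saturation patch does not close it.

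With the paper's choice the uniform translate comes for free. If $y\in\mathcal{D}_{0}^{E}(C)$, pick any $\theta\in C\cap yC$; then $\theta\in{}^{\ast}\!A$ and $y^{-1}\theta\in\eta\,{}^{\ast}\!B^{-1}$, so $y=\theta\cdot b\cdot\eta^{-1}\in{}^{\ast}\!A\,{}^{\ast}\!B\,\eta^{-1}$ for some $b\in{}^{\ast}\!B$. Thus $\mathcal{D}_{0}^{E}(C)\subseteq{}^{\ast}(AB)\,\eta^{-1}$ with the \emph{fixed} $\eta$ already supplied by Lemma~\ref{concentration}, and from $X\subseteq F\cdot\mathcal{D}_{0}^{E}(C)$ one gets $X\eta\subseteq{}^{\ast}(FAB)$ in one line --- no further saturation is needed.

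With your $C={}^{\ast}\!A\cap{}^{\ast}\!B\,\xi$, the same manipulation only yields $y\in{}^{\ast}\!A\,\xi^{-1}\,{}^{\ast}\!B^{-1}$ or $y\in{}^{\ast}\!B\,\xi\,{}^{\ast}\!A^{-1}$; neither has the shape ${}^{\ast}(AB)\cdot(\text{fixed element})$, so your $\eta=\theta\xi^{-1}$ genuinely depends on $h$. Saturation cannot rescue this: the internal sets $\{\eta\in{}^{\ast}G\mid h\eta\in{}^{\ast}(FAB)\}$ for $h\in X$ are exactly the family whose finite intersection property you are trying to prove, so invoking saturation here is circular. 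Your suggestion of finding a common $\theta$ for all $h\in H$ sharing the same $f$ would require $\bigcap_{h}(C\cap f^{-1}hC)\neq\emptyset$, but there is no reason a finite intersection of sets of positive Loeb measure is nonempty. (There is also a minor algebraic slip: from $y=f^{-1}h\in\mathcal{D}_{0}^{E}(C)$ and $\theta\in C\cap yC$ one gets $h^{-1}f\,\theta\in C$, not $f^{-1}h\,\theta\in C$.) The fix is simply to use conclusion~\eqref{Eq: concentration 2} of Lemma~\ref{concentration} in place of~\eqref{Eq: concentration 1}; then everything you wrote goes through without any uniformization step.
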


\begin{proof}
By Lemma \ref{concentration}, we can pick a F\o lner approximation $%
E\subseteq {}^{\ast }G$ and an element $\eta \in {}^{\ast }G$ such that the
internal set $X={}^{\ast }\!A \cap \eta {}^{\ast }\!B^{-1}\cap E$ has
relative density $|X|/|E|\gtrsim \alpha \beta $. Then by Lemma \ref{Lemma:
Density delta set} with $\varepsilon =0$, there exists a finite $F\subset G$
such that $\left\vert F\right\vert \leq 1/{\alpha \beta }$ and $X\subseteq
F\cdot \mathcal{D}_{0}^{E}(X)$. If $g\in X$, there are $\xi \in F$ and $y\in
\mathcal{D}_{0}^{E}(X)$ such that $g=\xi y$. Since $y\in \mathcal{D}
_{0}^{E}(X)$, ${}^{\ast }\!A \cap \eta {}{}^{\ast }B ^{-1}\cap y{}^{\ast
}\!A \cap y\eta {}^{\ast}\!B^{-1}\neq \emptyset $. In particular, $y=ab\eta
^{-1}$ for some $a\in {}^{\ast }\!A$ and $b\in {}^{\ast }B$. Therefore,
\begin{equation*}
g=\xi y=\xi ab\eta ^{-1}
\end{equation*}
and
\begin{equation*}
g\eta =\xi ab\in F{}{}^{\ast}\!A {}^{\ast}\!B={}^{\ast }\!\left( FAB\right)
\text{.}
\end{equation*}
Since this is true for every $g\in X$,
\begin{equation*}
X\eta \subset {}^{\ast }\!\left( FAB\right) \text{.}
\end{equation*}
Hence, by the nonstandard characterization of finite embeddability,
\begin{equation*}
X\vartriangleleft FAB\text{.}\qedhere
\end{equation*}
\end{proof}

\medskip

\begin{corollary}
\label{Corollary: Jin with bound}Under the hypothesis of Theorem \ref%
{Theorem: Jin with bound}, $AB$ is piecewise $k$-syndetic where $%
k=\left\lfloor \frac{1}{\alpha \beta }\right\rfloor $.
\end{corollary}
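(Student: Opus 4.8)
The plan is to read this off directly from Theorem~\ref{Theorem: Jin with bound} by taking the infinite set $X$ there to be all of $G$. First I would dispose of the trivial case in which $G$ itself is finite (then $d$ is just normalized counting measure and a crude covering argument gives the bound), so that we may assume $G$ is infinite. Then I would apply Theorem~\ref{Theorem: Jin with bound} with $X=G$ and $w$ the identity element of $G$. This yields a finite set $F\subseteq G$ with $|F|\le 1/(\alpha\beta)$ and $G\vartriangleleft FAB$.

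Next I would unwind the two parts of this conclusion. On the one hand, $G\vartriangleleft FAB$ means exactly that $FAB$ is thick, by the observation recorded right after the definition of finite embeddability (``$A$ is thick if and only if $G\vartriangleleft A$''). On the other hand, $|F|$ is a natural number bounded above by the real number $1/(\alpha\beta)$, hence $|F|\le\big\lfloor 1/(\alpha\beta)\big\rfloor=k$. Combining these, $F$ is a set of at most $k$ elements such that the union $\bigcup_{f\in F}f(AB)=FAB$ of the corresponding translates of $AB$ is thick, which is precisely the assertion that $AB$ is piecewise $k$-syndetic.

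I do not anticipate any real obstacle: essentially all the work is already contained in Theorem~\ref{Theorem: Jin with bound}, and the corollary is just the instance $X=G$ together with the elementary bookkeeping that an integer which is $\le 1/(\alpha\beta)$ is in fact $\le\lfloor 1/(\alpha\beta)\rfloor$. The only point worth a moment's care is that Theorem~\ref{Theorem: Jin with bound} is stated for an \emph{infinite} set $X$, so one must check separately that $G$ is infinite (or handle the finite case by hand) before invoking it with $X=G$.
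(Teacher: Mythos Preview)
Your proposal is correct and follows essentially the same route as the paper: set $X=G$ in Theorem~\ref{Theorem: Jin with bound}, conclude that $FAB$ is thick for some finite $F$ with $|F|\le 1/(\alpha\beta)$, and read off piecewise $\lfloor 1/(\alpha\beta)\rfloor$-syndeticity. You are simply more explicit than the paper in spelling out the floor argument and in noting that the hypothesis ``$X$ infinite'' forces one to treat finite $G$ separately (the paper silently ignores this edge case).
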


\begin{proof}
Set $X=G$ and apply Theorem \ref{Theorem: Jin with bound}. Thus, $%
G\vartriangleleft FAB$ for some $F\subset G$ such that $\left\vert
F\right\vert \leq \frac{1}{\alpha \beta }$. Henceforth $FAB$ is thick and $%
AB $ is piecewise $\left\lfloor \frac{1}{\alpha \beta }\right\rfloor$%
-syndetic.
\end{proof}

\bigskip

\section{Countable amenable groups}

\label{sec-countable}

Throughout this section we focus on \emph{countable} amenable groups and
prove finite embeddability properties.

\smallskip By \cite{bbf} (Corollary 5.3), if $A\subseteq G$ has positive F\o %
lner density and $G$ is countable, then $AA^{-1}$ is piecewise Bohr.
Moreover, by \cite{bbf} (Lemma 5.4), if $A,B\subseteq G$, $A$ is piecewise
Bohr and $A\vartriangleleft B$, then $B$ is piecewise Bohr as well. It is a
standard result in ergodic theory (see for example \cite{lin}) that any
countable discrete amenable group $G$ has a F\o lner sequence $\left(
F_{n}\right) _{n\in \mathbb{N}}$ for which the \emph{pointwise ergodic
theorem} holds. This means that, if $G$ acts on a probability space $\left(
X,\mathcal{B},\mu \right) $ by measure preserving transformations and $f\in
L^{1}\left( \mu \right) $, then there is a $G$-invariant $\bar{f}\in
L^{1}\left( \mu \right) $ such that, for $\mu $-almost all $x\in X$:
\begin{equation*}
\lim_{n\to\infty} \frac{1}{\left\vert F_{n}\right\vert }\sum_{g\in
F_{n}}f\left( gx\right)\ =\ \bar{f}(x)\,.
\end{equation*}

\begin{lemma}
\label{Lemma: pullback} If $E$ is a F\o lner approximation of $G$, $0<\gamma
\leq 1$, and $C$ is an internal subset of $E$ such that $\frac{\left\vert
C\right\vert }{\left\vert E\right\vert }\gtrsim \gamma $, then there exists $%
\xi \in E$ such that
\begin{equation*}
d\left( C\xi ^{-1}\cap G\right) \geq \gamma \text{.}
\end{equation*}
\end{lemma}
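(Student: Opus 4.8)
The goal is to "pull back" the density of the internal set $C$ inside a Folner approximation $E$ to a genuine Folner density statement about a translate of $C$ living in $G$. The natural vehicle is the pointwise ergodic theorem for countable amenable groups, applied to a measure-preserving action built from $E$ and the Loeb measure. First I would fix a Folner sequence $(F_n)_{n\in\N}$ of $G$ for which the pointwise ergodic theorem holds (such a sequence exists by the standard result quoted above). I would then consider the action of $G$ on the hyperfinite set $E$: for $g\in G$, left translation $x\mapsto gx$ maps $E$ to $gE$, and since $E$ is a Folner approximation we have $|gE\triangle E|/|E|\approx 0$, so after discarding a null set this gives a measure-preserving action of $G$ on the Loeb probability space $(E,\mathcal{L},\mu_L)$, where $\mu_L(B)=\mathrm{st}(|B|/|E|)$ for internal $B\subseteq E$. (One has to be a little careful: $gx$ may fall outside $E$; but the set of $x\in E$ with $gx\notin E$ is Loeb-null, so the action is defined $\mu_L$-a.e., which is enough for the ergodic theorem.)

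Next I would apply the pointwise ergodic theorem to the characteristic function $f=\chi_C\in L^1(\mu_L)$. This yields a $G$-invariant function $\bar f\in L^1(\mu_L)$ with $\int \bar f\,d\mu_L=\int \chi_C\,d\mu_L=\mathrm{st}(|C|/|E|)\ge\gamma$, and such that for $\mu_L$-almost every $x\in E$,
\begin{equation*}
\lim_{n\to\infty}\frac{1}{|F_n|}\sum_{g\in F_n}\chi_C(gx)=\bar f(x).
\end{equation*}
Since $\int\bar f\,d\mu_L\ge\gamma$ and $0\le\bar f\le 1$, there must exist a point $x_0\in E$ (in fact a positive-measure set of them) with $\bar f(x_0)\ge\gamma$; choose such an $x_0$ at which the ergodic averages converge. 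Writing $x_0=\xi$ for notational consistency with the statement, we get
\begin{equation*}
\lim_{n\to\infty}\frac{|\{g\in F_n : g\xi\in C\}|}{|F_n|}=\bar f(\xi)\ge\gamma.
\end{equation*}

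The final step is to translate this into a density statement about $C\xi^{-1}\cap G$. Note $g\xi\in C$ iff $g\in C\xi^{-1}$, so $\{g\in F_n : g\xi\in C\}=F_n\cap C\xi^{-1}\cap G$ (the intersection with $G$ being harmless since $F_n\subseteq G$). Hence
\begin{equation*}
\limsup_{n\to\infty}\frac{|F_n\cap(C\xi^{-1}\cap G)|}{|F_n|}\ge\gamma,
\end{equation*}
and by the characterization of Folner density via Folner sequences, $d(C\xi^{-1}\cap G)\ge\gamma$, as desired. The main obstacle is the bookkeeping around the fact that the $G$-action on $E$ is only defined almost everywhere (because $E$ need not be exactly $G$-invariant, only approximately so in the Loeb sense); one should check that this partial action still falls under the hypotheses of the pointwise ergodic theorem, either by extending it arbitrarily to a genuine action or by invoking a version of the theorem for such "near-actions." Everything else — identifying the integral of $\bar f$, extracting a good point $\xi$, and the final unwinding of definitions — is routine.
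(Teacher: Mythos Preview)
Your argument is essentially the same as the paper's: fix a F\o lner sequence satisfying the pointwise ergodic theorem, apply it to $\chi_C$ on the Loeb probability space of $E$ under the left-translation action of $G$, pick $\xi$ where the limit function $\bar f$ is at least $\gamma$, and unwind. The paper handles the bookkeeping slightly differently (working on a separable sub-$\sigma$-algebra and invoking dominated convergence to identify $\int\bar f$), but the structure and all substantive steps coincide with yours, and you correctly flag the a.e.-defined action issue that the paper leaves implicit.
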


\begin{proof}
Pick a F\o lner sequence $\left( F_{n}\right) _{n\in \mathbb{N}}$ for $G$
that satisfies the pointwise ergodic theorem. Consider the (separable) $%
\sigma $-algebra $\mathcal{B}$ on $E$ generated by the characteristic
function $\chi _{C}$ of $C$, the probability space $\left( E,\mathcal{B},\mu
\right) $ where $\mu $ is the restriction of the Loeb measure to $\mathcal{B}
$ and the measure preserving action of $G$ on $\left( E,\mathcal{B},\mu
\right) $ by left translations. Since $\chi _{C}$ belongs to $L^{1}\left(
\mu \right) $, there is a $G$-invariant function $\bar{f}\in L^{1}\left( \mu
\right) $ such that the sequence
\begin{equation*}
\left( \frac{1}{\left\vert F_{n}\right\vert }\sum_{g\in F_{n}}\chi
_{C}\left( gx\right) \right) _{n\in \mathbb{N}}
\end{equation*}%
converges to $\bar{f}\left( x\right) $ for $\mu $-a.a. $x\in E$ and hence,
by the Lebesgue dominated convergence theorem, in $L^{1}\left( \mu \right) $%
. This implies in particular that
\begin{equation*}
\int \bar{f}d\mu =\int \chi _{C}d\mu =\mathrm{st}\left( \frac{\left\vert
C\right\vert }{\left\vert E\right\vert }\right) =\gamma \text{.}
\end{equation*}%
Thus, the set of $x\in E$ such that $\bar{f}\left( x\right) \geq \gamma $ is
non negligible and, in particular, there is $\xi \in X$ such that $\bar{f}%
\left( \xi \right) \geq \gamma $ and the sequence
\begin{equation*}
\left( \frac{1}{\left\vert F_{n}\right\vert }\sum_{g\in F_{n}}\chi
_{C}\left( g\xi \right) \right) _{n\in \mathbb{N}}
\end{equation*}%
converges to $\bar{f}\left( \xi \right) \geq \gamma $. Observe now that, for
every $n\in \mathbb{N}$,
\begin{eqnarray*}
\frac{1}{\left\vert F_{n}\right\vert }\sum_{g\in F_{n}}\chi _{C}\left( g\xi
\right)  &=&\frac{\left\vert C\cap F_{n}\xi \right\vert }{\left\vert
F_{n}\right\vert } \\
&=&\frac{\left\vert C\xi ^{-1}\cap F_{n}\right\vert }{\left\vert
F_{n}\right\vert } \\
&=&\frac{\left\vert \left( C\xi ^{-1}\cap G\right) \cap F_{n}\right\vert }{%
\left\vert F_{n}\right\vert }\text{.}
\end{eqnarray*}%
From this and the fact $\left( F_{n}\right) _{n\in \mathbb{N}}$ is a F\o %
lner sequence for $G$ it follows that
\begin{equation*}
d\left( C\xi ^{-1}\cap G\right) \geq \gamma \text{.}\qedhere
\end{equation*}
\end{proof}

\medskip

\begin{theorem}
\label{Theorem: countable amenable} Let $G$ be a countable amenable group
and suppose that $A_{1},\ldots ,A_{n}\subseteq G$ have positive Banach
densities $d(A_{i})=\alpha _{i}$. Then there exists $B\subseteq G$ such that
$d(B)\geq \prod_{i=1}^{n}\alpha _{i}$ and $B\vartriangleleft A_{i}$ for
every $i=1,\ldots ,n$. As a consequence, $BB^{-1}\subseteq
\bigcap_{i=1}^{n}A_{i}A_{i}^{-1}$ and $\Delta _{\varepsilon }(B)\subseteq
\bigcap_{i=1}^{n}\Delta _{\varepsilon }(A_{i})$ for every $\varepsilon \geq
0 $. In particular, $\bigcap_{i=1}^{n}A_{i}A_{i}^{-1}$ is piecewise Bohr.
\end{theorem}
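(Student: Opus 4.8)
The plan is to run the ``concentration'' Lemma \ref{concentration} and the ``pullback'' Lemma \ref{Lemma: pullback} inside a sufficiently saturated nonstandard model, and then to read off the finite embeddability relations $B\vartriangleleft A_i$ from the translating elements that these lemmas produce. Concretely, I would first apply Lemma \ref{concentration} with $A_0=G$ (so that $\alpha_0=1$, since $d(G)=1$) to the sets $A_1,\ldots,A_n$. This yields, in a sufficiently saturated model, a F\o lner approximation $E\subseteq{}^{\ast}G$ and elements $\xi_1,\ldots,\xi_n\in{}^{\ast}G$ for which the internal set
\begin{equation*}
C\ :=\ \left(\bigcap_{i=1}^{n}{}^{\ast}\!A_i\,\xi_i\right)\cap E
\end{equation*}
satisfies $|C|/|E|\gtrsim\gamma$, where $\gamma:=\prod_{i=1}^{n}\alpha_i$; note that $0<\gamma\leq 1$.

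Next I would apply Lemma \ref{Lemma: pullback} to the internal subset $C$ of $E$ with this $\gamma$, obtaining $\xi\in E$ such that the \emph{standard} set $B:=C\xi^{-1}\cap G$ has $d(B)\geq\gamma=\prod_{i=1}^{n}\alpha_i$. It then remains to verify that $B\vartriangleleft A_i$ for each $i$. Since $B\subseteq C\xi^{-1}\subseteq{}^{\ast}\!A_i\,\xi_i\,\xi^{-1}$, we have $B\,(\xi\xi_i^{-1})\subseteq{}^{\ast}\!A_i$; because $B$ and $A_i$ are standard sets, the nonstandard characterization of finite embeddability (applied with the single element $\eta_i=\xi\xi_i^{-1}\in{}^{\ast}G$) gives $B\vartriangleleft A_i$.

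The remaining assertions follow formally. From $B\vartriangleleft A_i$ one obtains $BB^{-1}\subseteq A_iA_i^{-1}$ and $\Delta_{\varepsilon}(B)\subseteq\Delta_{\varepsilon}(A_i)$ for every $\varepsilon\geq 0$, as recorded just after the nonstandard characterization of $\vartriangleleft$; intersecting over $i=1,\ldots,n$ gives $BB^{-1}\subseteq\bigcap_{i=1}^{n}A_iA_i^{-1}$ and $\Delta_{\varepsilon}(B)\subseteq\bigcap_{i=1}^{n}\Delta_{\varepsilon}(A_i)$. Finally, since $d(B)>0$ and $G$ is countable, $BB^{-1}$ is piecewise Bohr by \cite{bbf} (Corollary 5.3); and the inclusion $BB^{-1}\subseteq\bigcap_{i=1}^{n}A_iA_i^{-1}$ trivially gives $BB^{-1}\vartriangleleft\bigcap_{i=1}^{n}A_iA_i^{-1}$ (use the identity translate), so \cite{bbf} (Lemma 5.4) shows that $\bigcap_{i=1}^{n}A_iA_i^{-1}$ is piecewise Bohr.

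Since Lemma \ref{concentration} and Lemma \ref{Lemma: pullback} are both already available, the only genuinely delicate point in the argument is the passage from the hyperfinite witness $C$ to a standard set $B$ of the prescribed Banach density --- which is exactly what Lemma \ref{Lemma: pullback} (via the pointwise ergodic theorem and the Loeb measure) is designed to provide. The only other thing to be careful about is the bookkeeping that combines $\xi$ with the $\xi_i$ into a single element $\eta_i$ witnessing $B\eta_i\subseteq{}^{\ast}\!A_i$, and hence $B\vartriangleleft A_i$.
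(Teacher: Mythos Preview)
Your proof is correct and follows essentially the same route as the paper: apply Lemma~\ref{concentration} (with $A_0=G$) to obtain the internal set $C=\bigl(\bigcap_i{}^{\ast}\!A_i\xi_i\bigr)\cap E$, then use Lemma~\ref{Lemma: pullback} to pull back to a standard set $B=C\xi^{-1}\cap G$, and read off $B\vartriangleleft A_i$ from the translate $B\,\xi\xi_i^{-1}\subseteq{}^{\ast}\!A_i$. Your version is in fact slightly more explicit than the paper's in two respects: you spell out the choice $A_0=G$ (which the paper's proof uses without comment), and you supply the piecewise Bohr conclusion via \cite{bbf}, which the paper leaves implicit.
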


\begin{proof}
By Lemma \ref{concentration} there exists a F\o lner approximation $%
E\subseteq {}^{\ast }G$ and elements $\xi _{1},\ldots ,\xi _{n}\in {}^{\ast
}G$ such that
\begin{equation*}
\frac{\left\vert {}^{\ast }\!A_{0}\cap \bigcap_{i=1}^{n}{}^{\ast }\!A_{i}\xi
_{i}\cap E\right\vert }{\left\vert E\right\vert }\gtrsim
\prod_{i=0}^{n}\alpha _{i}\text{.}
\end{equation*}%
By applying \ref{Lemma: pullback} to $E$ and $C={}^{\ast }\!A_{0}\cap
\bigcap_{i=1}^{n}\xi _{i}{}^{\ast }\!A_{i}\cap E$ one obtains $\eta \in E$
such that
\begin{equation*}
d\left( C\eta ^{-1}\cap G\right) \geq \prod_{i=0}^{n}\alpha _{i}\text{.}
\end{equation*}%
Define $B=C\eta ^{-1}\cap G$ and observe that $B\eta \subseteq {}^{\ast
}\!A_{0}$ and $B\eta \xi _{i}^{-1}\subseteq {}^{\ast }\!A_{i}$ for $1\leq
i\leq n$. This implies that $B\vartriangleleft A_{i}$ for $0\leq i\leq n$.
\end{proof}

\bigskip

\section{Final remarks and open problems}

In a preliminary version of \cite{jin}, R. Jin asked whether one could
estimate the number $k$ needed to have $A+B+[0,k)$ thick (under the
assumption that both sets $A,B\subseteq\mathbb{N}$ have positive Banach
density). In the final published version of that paper, he then pointed out
that no such estimate for $k$ exists which depends only on the densities of $%
A$ and $B$. In fact, the following holds.

\medskip

\begin{itemize}
\item \emph{Let $\alpha,\beta>0$ be real numbers such that $\alpha+\beta<1$,
and let $k\in\mathbb{N}$. Then there exist sets $A_k,B_k\subseteq\mathbb{N}$
such that the asymptotic densities $d(A_k)>\alpha$ and $d(B_k)>\beta$ but $%
A_k+B_k+[0,k)$ is not thick. }
\end{itemize}

\smallskip An example can be constructed as follows.\footnote{%
~This example did not appear in \cite{jin}, and it is reproduced here with
Jin's permission.} Pick natural numbers $M,N,L$ such that $M/L>\alpha $ , $%
N/L>\beta $, and $M/L+N/L+1/L<1$. For every $k\in \mathbb{N}$, consider the
following subsets of $\mathbb{N}$:
\begin{equation*}
A_{k}\ =\ \bigcup_{n=0}^{\infty }[Lnk,Lnk+Mk)\quad \text{and}\quad B_{k}\ =\
\bigcup_{n=0}^{\infty }[Lnk,Lnk+Nk).
\end{equation*}
Then the following properties are verified in a straightforward manner:

\smallskip

\begin{itemize}
\item The asymptotic densities $d(A_{k})=M/L$ and $d(B_{k})=N/L$.

\smallskip

\item $A_{k}+B_{k}+[0,k)=\bigcup_{n=0}^{\infty }[Lnk,Lnk+Mk+Nk+k)$.
\end{itemize}

\smallskip

Since $Lkn+Mk+Nk+k<Lkn+Lk=Lk(n+1)$, it follows that $A_{k}+B_{k}+[0,k)$ is
not thick, as it consists of disjoint intervals of length $(M+N+1)k$.

\medskip

However, as remarked by M. Beiglb\"{o}ck, the problem was left open if one
replaces the length $k$ of the interval $[0,k)$ with the cardinality $k$ of
an arbitrary finite set. As shown by our Theorem \ref{Theorem: Jin with
bound}, one can in fact give the bound $k\leq 1/\alpha \beta $. Now, the
question naturally arises as to whether such a bound is optimal.

\bigskip

Next, it is easy to see that if $G$ is abelian and $B\subseteq G$ then $%
d\left( B\right) =d\left( B^{-1}\right) $. Thus, it follows from Corollary %
\ref{Corollary: Jin with bound} that if $A,B\subseteq G$ are such that $%
d\left( A\right) =\alpha $ and $d\left( B\right) =\beta $ and $G$ is abelian
then both $AB$ and $AB^{-1} $ are piecewise $\left\lfloor \frac{1}{\alpha
\beta } \right\rfloor $-syndetic. It would be interesting to know if the
same is true for more general amenable groups. More precisely: if $G$ is an
amenable group and $B\subseteq G$, then do $B$ and $B^{-1}$ have the same
density? Or at least, is it always the case that $B$ has positive density if
and only if $B^{-1}$ has positive density? Besides, is the statement of
Corollary \ref{Corollary: Jin with bound} still true where one replaces $AB$
with $AB^{-1}$?

\bigskip

Finally, all the results of this paper are proved without assumptions on the
cardinality of the group, apart from Theorem \ref{Theorem: countable
amenable}, where $G$ is supposed to be countable. It would be interesting to
know if also this result holds for any amenable group, regardless of its
cardinality.

\bigskip



\begin{thebibliography}{99}
\bibitem{nato} Leif~O. Arkeryd, Nigel~J. Cutland, and C.~Ward Henson (eds.),
\emph{Nonstandard analysis}, NATO Advanced Science Institutes Series C:
Mathematical and Physical Sciences, vol. 493, Dordrecht, Kluwer Academic
Publishers Group, 1997, Theory and applications. \MR{1603227 (98i:03006)}

\bibitem{bbf} Mathias Beiglb{\"o}ck, Vitaly Bergelson, and Alexander Fish,
\emph{Sumset phenomenon in countable amenable groups}, Adv. Math. \textbf{223%
} (2010), no.~2, 416--432. \MR{2565535 (2011a:11011)}

\bibitem{bhm} Vitaly Bergelson, Neil Hindman, and Randall McCutcheon, \emph{%
Notions of size and combinatorial properties of quotient sets in semigroups}%
, Proceedings of the 1998 {T}opology and {D}ynamics {C}onference ({F}airfax,
{VA}), vol.~23, 1998, pp.~23--60. \MR{1743799 (2001a:20114)}

\bibitem{ck} Chen Chung Chang and H.~Jerome Keisler, \emph{Model theory},
third ed., Studies in Logic and the Foundations of Mathematics, vol.~73,
North-Holland Publishing Co., Amsterdam, 1990. \MR{1059055 (91c:03026)}

\bibitem{Cut} Nigel~J. Cutland, \emph{Loeb measures in practice: recent
advances}, Lecture Notes in Mathematics, vol. 1751, Springer-Verlag, Berlin,
2000. \MR{1810844
  (2002k:28018)}

\bibitem{lnl} Nigel~J. Cutland, Mauro Di~Nasso, and David~A. Ross (eds.),
\emph{Nonstandard methods and applications in mathematics}, Lecture Notes in
Logic, vol.~25, Association for Symbolic Logic, La Jolla, CA, 2006.
\MR{2209072
  (2006i:03004)}

\bibitem{dn} Mauro Di Nasso, \emph{Embeddability properties of difference sets},
Integers, to appear.

\bibitem{Folner} Erling F{\o }lner, \emph{On groups with full {B}anach mean
value}, Math. Scand. \textbf{3} (1955), 243--254. \MR{0079220 (18,51f)}

\bibitem{Goldblatt} Robert Goldblatt, \emph{Lectures on the hyperreals},
Graduate Texts in Mathematics, vol. 188, Springer-Verlag, New York, 1998, An
introduction to nonstandard analysis. \MR{1643950 (2000a:03113)}

\bibitem{Henson-measures}
C.~Ward Henson, \emph{On the nonstandard representation of measures}, Trans. Amer. Math. Soc. \textbf{172} (1972), 437--446.

\bibitem{hs2006} Neil Hindman and Dona Strauss, \emph{Density in arbitrary
semigroups}, Semigroup Forum \textbf{73} (2006), no.~2, 273--300.
\MR{2280825
  (2009b:11019)}

\bibitem{jin} Renling Jin, \emph{The sumset phenomenon}, Proc. Amer. Math.
Soc. \textbf{130} (2002), no.~3, 855--861 (electronic).
\MR{1866042
(2002h:03149)}

\bibitem{lin} Elon Lindenstrauss, \emph{Pointwise theorems for amenable
groups}, Invent. Math. \textbf{146} (2001), no.~2, 259--295.
\MR{1865397
(2002h:37005)}

\bibitem{Paterson} Alan L.~T. Paterson, \emph{Amenability}, Mathematical
Surveys and Monographs, vol.~29, American Mathematical Society, Providence,
RI, 1988. \MR{961261
  (90e:43001)}

\bibitem{Wagon} Stan Wagon, \emph{The {B}anach-{T}arski paradox}, Cambridge
University Press, Cambridge, 1993, With a foreword by Jan Mycielski,
Corrected reprint of the 1985 original. \MR{1251963 (94g:04005)}
\end{thebibliography}

\end{document}